\newtheorem{theorem}{Theorem}[section]
\newtheorem{example}[theorem]{Example}
\newtheorem{remark}[theorem]{Remark}
\newtheorem{proposition}[theorem]{Proposition}
\DeclarePairedDelimiter\floor{\lfloor}{\rfloor}
\def\cX{\mathcal X}
\def\cA{\mathcal A}
\def\a{\alpha}
\def\fqs{\mathbb F_{q^2}}
\def\fq{\mathbb F_q}
\def\deg{{\rm deg}}
\def\a{\alpha}
\newcommand{\al}{\alpha}
\begin{document}

\title{Reciprocal polynomials and curves with many points over a finite field}

\thanks{{\bf Keywords}: Function fields; Algebraic curves; Rational points}

\thanks{{\bf Mathematics Subject Classification (2010)}: 11G20, 14G05, 14G15, 14H05.}

\author{Rohit Gupta, Erik A. R. Mendoza and Luciane Quoos}

\address{Department of Mathematics, Birla institute of Technology and Science-Pilani, Hyderabad Campus, Hyderabad, Telangana, 500078, India}
\email{rohittgupta20@gmail.com}

\address{Instituto de Matemática, Universidade Federal do Rio de Janeiro, Cidade Universitária,
	CEP 21941-909, Rio de Janeiro, Brazil}
\email{luciane@im.ufrj.br}

\address{Instituto de Matemática, Universidade Federal do Rio de Janeiro, Cidade Universitária,
	CEP 21941-909, Rio de Janeiro, Brazil}
\email{erik@pg.im.ufrj.br}

\thanks{The research of Luciane Quoos and Erik A. R. Mendoza was partially supported by CNPq (Bolsa de Produtividade 302727/2019-1 and CNPq 141907/2020-7, respectively) and FAPERJ (SEI-260003/002364/2021 and FAPERJ 201.650/2021, respectively).}

\begin{abstract}
Let $\fqs$ be the finite field with $q^2$ elements. We provide a simple and effective method, using reciprocal polynomials, for the construction of algebraic curves over $\fqs$ with many rational points. The curves constructed are Kummer covers or fibre products of Kummer covers of the projective line. Further, we compute the exact number of rational points for some of the curves. 
\end{abstract}

\maketitle

\section{Introduction}

Let $\fq$ be the finite field with $q$ elements, where $q$ is a prime power. Let $\cX$ be a nonsingular, projective and absolutely irreducible algebraic curve defined over $\fq$. The celebrated theorem of Hasse-Weil states that
			$$|\# \cX (\fq)-q-1| \leq 2g(\cX) \sqrt{q}$$ 
where $\# \cX(\fq)$ is the number of $\fq$-rational points on the curve $\cX$ and  $g(\cX)$ is the genus of the curve $\cX$ over $\fq$.
A curve $\cX$ over $\fqs$ is called {\it maximal} if the number of $\fqs$-rational points $\# \cX(\fqs)$ attains the Hasse-Weil bound. It is well known that the genus of a maximal curve over $\fqs$ is at most $q(q-1)/2$.

In 1981, Goppa \cite{Goppa1981} introduced a way to associate an error-correcting code to a linear system on a curve over a finite field. In order to construct such good codes one requires curves with a large number of points. This leads to interest in study of curves over finite fields with many rational points with respect to their genus. Also such curves have applications in low-discrepancy sequences, stream ciphers, hash functions and finite geometries.

In the last few decades, several methods, such as class field theory, Drinfeld module, geometric and character theory, to find algebraic curves with many points have been studied \cite{RL2013, H2017, R2013, G2009, K2003, GQ2001, GG2003,  GV2000, XY2007, K2017, GV2015}. More explicit details about these methods can be found in \cite{GV1997}. However, the computation of the exact number of rational points on a given curve has always been a challenging problem and a general method to do such computations seems out of reach. Nevertheless, for certain very specific curves, some methods, such as evaluation of exponential sums and Kloosterman sums, as well as function field theory, have been helpful. For instance, Coulter \cite{C2002} used exponential sums to compute the number of rational points on a class of Artin-Schreier curves and Moisio \cite{M2007} used exponential sums and Kloosterman sums to compute the number of rational points on some families of Fermat curves. In \cite{OT2014,OTY2016}, the authors considered fibre products of Kummer covers of the projective line over $\fq$. In \cite{BM2020}, the authors gave a full description of the number of rational points in some extension $\mathbb{F}_{q^r}$ of $\fq$ in terms of Legendre symbol and quadratic characters for the Artin-Schreier curve $y^q-y= xP(x)-\lambda$ where $P(x) = x^{q^i}-x$ and $\lambda \in \fq$. For more details about these methods we refer to \cite{OT2014, OTY2016, M2007, C2002, BM2020}.

For a curve $\cX$ over $\fq$ with $g(\cX)\leq 50$, the webpage www.manypoints.org \cite{MP2009} collects the current intervals in which the number $\#\cX(\fq)$ is known to lie for some values of $q$. For a pair $(q, g)$, the tables there record an interval $[a, b]$ where $b$ is the best upper bound for the maximum number of points of a curve of genus $g$ over $\fq$, and $a$ gives a lower bound obtained from an explicit example of a curve $\cX$ defined over $\fq$ with $a$ (or at least $a$) rational points. At some places in manYPoints table in \cite{MP2009}, the lower bound $a$ of the interval $[a, b]$ is replaced by the symbol $`-$' where $`-$' represents the lower bound $L(q,g)$ given in Remark \ref{remark_manYPoints}.

In this article we improve upon the lower bounds of many of the intervals in \cite{MP2009} by constructing new examples of curves with many points. We provide a simple and effective construction of Kummer covers and fibre products of Kummer covers of the projective line over finite fields with many rational points using reciprocal polynomials. We give a general lower bound for the number of the rationals points under certain hypothesis, see Theorems \ref{theoX1}, \ref{theoX2}, \ref{theoX1X1} and \ref{theoX2X2}. In fact we calculate the exact number of rational points  for some particular constructions, see Theorem \ref{theo_N1}, Propositions \ref{propX1} and \ref{propX1_2}. 

As a consequence of these constructions, we obtain several improvements on the manYPoints table \cite{MP2009}. More precisely, we obtain the following $10$ new records.
	
\begin{enumerate}[(i)]
	\item A curve of genus $13$ over $\mathbb{F}_{7^4}$ with $3576$ rational points (see Example 4.7).
	\item A curve of genus $17$ over $\mathbb{F}_{7^4}$ with $3968$ rational points (see Example 4.7).
	\item A curve of genus $15$ over $\mathbb{F}_{17^2}$ with $708$ rational points (see Example 5.2).
	\item A curve of genus $10$ over $\mathbb{F}_{11^4}$ with $16952$ rational points (see Example 5.2).
	\item A curve of genus $15$ over $\mathbb{F}_{19^2}$ with $866$ rational points (see Example 5.2).
	\item A curve of genus $13$ over $\mathbb{F}_{17^2}$ with $648$ rational points (see Example 5.3).
	\item A curve of genus $8$ over $\mathbb{F}_{11^4}$ with $16566$ rational points (see Example 5.4).
	\item A curve of genus $22$ over $\mathbb{F}_{5^2}$ with $174$ rational points (see Example 6.2).
	\item A curve of genus $11$ over $\mathbb{F}_{13^2}$ with $444$ rational points (see Example 6.4).
	\item A curve of genus $13$ over $\mathbb{F}_{13^2}$ with $444$ rational points (see Example 6.4).
\end{enumerate}
We also obtain $119$ new entries and among these we point out three important new entries: explicit equations for maximal curves of genus $18$ and $36$ over $\mathbb{F}_{5^4}$ and analogously, an explicit equation for a maximal curve of genus $46$ over $\mathbb{F}_{7^4}$, see Remark \ref{novamaximal}.

	The remainder of this paper is organized as follows. In Section 2 we include the preliminaries and the notation used in the development of this work. In Section 3 we present a family of Kummer covers of the projective line over $\fqs$ defined by an affine equation of the type
			$$y^m=x^{\epsilon s} f(x)f^*(x)^{\lambda}$$ 
  $\epsilon, \lambda \in \{1, -1\}$, $s$ is a non-negative integer, $q=p^n$ with $p \nmid m$, $f(x)$ is a polynomial in $\fq[x]$ and $f^*(x)$ is the reciprocal polynomial of $f(x)$. We compute the genus of this family of curves, see Proposition \ref{prop1}. In Section 4, we study the particular case $\epsilon=-1$ and $\lambda=1$, and provide the exact number of rational points for some families of curves, see Theorem \ref{theo_N1}, Propositions \ref{propX1}  and \ref{propX1_2}. For certain parameters this families of curves turn out to be maximal over $\fqs$. We also give new examples of curves with many rational points, see Examples \ref{Example1}, \ref{Example2}, \ref{exmax2}, \ref{Example8}, \ref{Example9} and \ref{Example10}. In Section 5, we study the case $\epsilon=1$ and $\lambda=-1$. We again obtain new examples of curves with many points, see Examples \ref{Example3}, \ref{Example4}, \ref{Example5}, \ref{Example6} and \ref{Example7}. In Section 6, we obtain new curves with many rational points by considering the fibre products of the curves constructed in Sections 4 and 5, see Examples \ref{Example12} and \ref{Example11}. All the examples were obtained using the software Magma \cite{Magma}.

\section{Preliminaries and notation}

Throughout this article, we let $p$ be a prime number, $q$ a power of the prime $p$ and $\fq$ the finite field with $q$ elements.  For a nonsingular, projective, absolutely irreducible algebraic curve $\cX$ of genus $g(\cX)$ over $\fq$, $\fq (\cX)$ denotes its function field (where $\fq$ is its full constant field) and $\cX (\fq)$ denotes the set of $\fq$-rational points of the curve. For a function $z\in \fq (\cX)$, $(z)_{\fq(\cX)}$ stands for the principal divisor
of the function $z$ in $\fq(\cX)$.

We denote by $K=\overline{\fq}$ the algebraic closure of $\fq$. Moreover, we denote by $\xi_q$ a primitive element of the finite field $\fq$ and by $(a, b)$ the greatest common divisor of the elements $a$ and $b$ in a unique factorization ring. 

Given a polynomial  $f$  in $\fq[x]$ and a subset $\cA \subseteq \fqs$, we let $N_{f}(\cA):=\# \{\a\in \cA \mid f(\a)=0\}$ stand for  the number of roots of $f$ in  $\cA$.

We set some notation about curves with many points in the following remark.

\begin{remark}\label{remark_manYPoints}
		We say that a curve $\cX$ over $\fq$ with genus $g$ has {\it many points} if the number of $\fq$-rational points of $\cX$, denoted by $\# \cX(\fq)$, satisfies
		\begin{equation}\label{condition_manYPoints1} 
			\# \cX(\fq) \geq L(q,g):=\floor*{\frac{U(q, g)-q-1}{\sqrt{2}}}+q+1.
		\end{equation}  
		where $U(q, g)$ denotes the upper bound given in manYPoints table \cite{MP2009} for the number of $\fq$-rational points of a curve over $\fq$ with genus $g$.
		In particular, for a pair $(q,g)$ and a curve $\cX$ over $\fq$ with genus $g$, we say that $\cX$ gives a {\bf new record} (resp. {\bf meets the record}) if the number $\# \cX(\fq)$ is strictly larger than (resp. is equal to) the lower bound registered in manYPoints table corresponding to $(q,g)$. Further, we say that a curve $\cX$ over $\fq$ with genus $g$ is a {\bf new entry} if there was no earlier lower bound entry in manYPoints table corresponding to $(q,g)$ and $\# \cX(\fq)$ satisfies the relation (\ref{condition_manYPoints1}). 
		
		In the tables where we provide a {\it new record}, the notation OLB (old lower bound) stands for the lower bound on the number $\# \cX(\fq)$ of rational points  for a curve over $\fq$ of genus $g$ registered in the table \cite{MP2009}. And, in the tables where we provide a {\it new entry} the notation OLB stands for the lower bound given in (\ref{condition_manYPoints1}). Further, in the tables, the symbol  $\dagger$ indicates a maximal curve over $\fqs$.
\end{remark}

We finish this section by presenting the following remark that will be useful in the sequel. 
For a more general version see \cite[Theorems 3, 4]{OT2014}. We point out that throughout the article a rational point on the curve is the  same as a rational place in the function field of the curve.

\begin{remark}\label{remark1} 
	Let $\fq(x, y)/\fq$ be a Kummer extension of degree $m$ with full constant field $\fq$ and defined by the equation $y^m=h(x)$ where $m$ is a divisor of $q-1$ and $h \in \fq(x)$. For each  $\alpha\in \fq$, we write
	$$h(x)=(x-\alpha)^{k_{\alpha}} g_{\alpha}(x)$$
	where $k_{\alpha}\in \mathbb{Z}$, $g_{\alpha}\in \fq (x)$ and $g_{\alpha}(\alpha) \notin \{ 0, \infty\}$. Then there exist either no or exactly $(m, k_{\a})$ rational places of $\fq (x, y)$ over $P_{\a}$. In fact, there exists a rational place of $\fq (x, y)$ over $P_{\alpha}$ if and only if $g_{\alpha}(\alpha)$ is a $(m, k_{\alpha})$-power in $\fq^*$. 
	Moreover, suppose $$h(x)= c_{\infty} \frac{g_1(x)}{g_2(x)}$$
	where $c_{\infty}\in \fq^*$ and $g_1, g_2$ are monic polynomials in $\fq[x]$ with $(g_1, g_2)=1$. Then there exist either no or exactly $(m, \deg~g_2 - \deg~g_1)$ rational places of $\fq (x, y)$ over $P_{\infty}$. In the case of $P_{\infty}$, there exists a rational place of $\fq (x, y)$ over $P_{\infty}$ if and only if $c_{\infty}$ is a $(m, \deg~g_2 - \deg~g_1)$-power in $\fq^*$.
\end{remark}

\section{A construction of curves over $\fqs$.}\label{construction}
In this section we propose a construction of algebraic curves over $\fqs$ using reciprocal polynomials. We will see that certain specific polynomials provide interesting algebraic curves with many points. This idea is explored in more detail in the subsequent sections.

Given a polynomial $f(x)=a_0+a_1x+\cdots +a_dx^d \in \fq[x] $ of degree $d$, denote by $f^*(x)=x^df(1/x)$ the reciprocal polynomial of $f$.
For $m\geq 2$ an integer not divisible by $p$ and $s$ a non-negative integer, consider the algebraic curve $\cX$ over $\fqs$ defined by the affine equation
\begin{equation}\label{curveX}
	\cX: \quad y^m=x^{\epsilon s} f(x)f^*(x)^\lambda \text{ where } \epsilon,\lambda \in \{1, -1\}.
\end{equation}

With some assumptions on $f$, we compute the genus of these curves in the following proposition.
\begin{proposition}\label{prop1}
	Let $d>0$ and let $f(x)=a_0+a_1x+\cdots +a_dx^d \in \fq[x]$ be a separable polynomial of degree $d$ satisfying $f(0)\neq 0$. Let $s$ be a non-negative integer, $d_1$ be the degree of $(f, f^*)$ and $m \geq 2$ be such that $p \nmid m$. If $d_1<d$, then the algebraic function field $K(x, y)$ defined by the affine equation 
	$$y^m=x^{\epsilon s} f(x)f^*(x)^{\lambda}, \quad \text{where }  \epsilon, \lambda \in \{1, -1\},$$
	has genus
	$$
	g=(m-1)d+1-\frac{(m, s)+ (m, \epsilon s+d+\ d\lambda)+d_1(m,\lambda + 1)+d_1(m-2)}{2}.
	$$
\end{proposition}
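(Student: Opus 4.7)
The plan is to view $K(x,y)/K(x)$ as a Kummer extension defined by $y^m = u(x)$ with $u(x) := x^{\epsilon s} f(x) f^*(x)^\lambda$, and to apply Riemann--Hurwitz together with the standard Kummer ramification analysis already recalled in Remark \ref{remark1}.

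First, I would verify that $[K(x,y):K(x)] = m$. Since $d_1 < d$, the polynomial $f$ has a root $\beta$ that is not a root of $f^*$; moreover $\beta \neq 0$ because $f(0) \neq 0$. Hence $v_{P_\beta}(u) = 1$, and an identity $u = c\cdot v^r$ with $r\mid m$, $r > 1$, would force $r \mid v_{P_\beta}(u)$, which is impossible. Thus the extension has degree exactly $m$, and the ramification is tame since $p \nmid m$.

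Next I would list the places $P$ of $K(x)$ with $m \nmid v_P(u)$, together with $v_P(u)$. Using $f(0) \neq 0$ and $f^*(0) = a_d \neq 0$, one finds $v_{P_0}(u) = \epsilon s$. From $\deg u = \epsilon s + d + d\lambda$ one gets $v_{P_\infty}(u) = -(\epsilon s + d + d\lambda)$. Since $f$ and $f^*$ are separable of degree $d$ and $\deg(f,f^*) = d_1$, there are $d - d_1$ roots of $f$ that are not roots of $f^*$, each contributing valuation $1$; symmetrically $d - d_1$ roots of $f^*$ not roots of $f$ contribute valuation $\lambda$; and each of the $d_1$ common roots contributes valuation $1 + \lambda$. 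All of these are degree-$1$ places of $K(x)$.

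Plugging into the Kummer genus formula for a tame cyclic cover of degree $m$ of the rational function field,
\[
g = 1 - m + \tfrac{1}{2}\sum_{P}\bigl(m - (m, v_P(u))\bigr)\deg P,
\]
and using $(m,\pm 1) = 1$ with $(m,-k) = (m,k)$, the right-hand side becomes
\[
1 - m + \tfrac{1}{2}\Bigl[(m-(m,s)) + (m-(m,\epsilon s + d + d\lambda)) + 2(d-d_1)(m-1) + d_1(m - (m,1+\lambda))\Bigr].
\]
The remaining step is elementary algebra: expand $(m-1)(d - d_1)$ and combine the $d_1$-contributions via $-(m-1)d_1 + d_1 m/2 = -d_1(m-2)/2$ to obtain the claimed expression. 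The only point requiring attention is that the formula must remain correct in the degenerate cases $s = 0$, $\lambda = -1$, or $\epsilon s + d + d\lambda = 0$; with the convention $(m,0) = m$, the corresponding places contribute $0$ to the different, which is exactly what happens (the place is unramified), so a single unified formula covers every case without further distinction. No genuine obstacle arises beyond this careful bookkeeping.
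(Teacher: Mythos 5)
Your proposal is correct and follows essentially the same route as the paper: compute the valuations of $x^{\epsilon s}f(x)f^*(x)^{\lambda}$ at $P_0$, $P_\infty$, the $2(d-d_1)$ non-common roots and the $d_1$ common roots of $f$ and $f^*$, then apply the tame Kummer ramification description and Riemann--Hurwitz. Your explicit check that $d_1<d$ forces $[K(x,y):K(x)]=m$, and your remark on the degenerate cases via $(m,0)=m$, are points the paper leaves implicit, but the argument and the bookkeeping are the same.
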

\begin{proof}
	At first we write
	$$x^{\epsilon s} f(x)f^*(x)^{\lambda}=x^{\epsilon s}(f(x)/h(x))(f^*(x)/h(x))^{\lambda}h(x)^{1+\lambda}$$
	where $h=(f, f^*)$. The polynomials $h$, $f/h$ and $f^*/h$ are separable and $\alpha \in K$ is a root of $f$ if and only if $\alpha^{-1}$ is a root of $f^*$. So, without loss of generality, we can suppose  that
	$$
	f/h=\beta_1\prod_{i=1}^{d-d_1}(x-\alpha_i), \hspace{3mm}f^*/h=\beta_2\prod_{j=1}^{d-d_1}(x-\gamma_j)\hspace{3mm}\mbox{and}\hspace{3mm}h=\beta\prod_{k=d-d_1+1}^{d}(x-\alpha_k),
	$$
	where $\beta_1, \beta_2, \beta$ are in $K$, $\alpha_1, \alpha_2, \dots, \alpha_{d}$ are the roots of the polynomial $f$, $\gamma_j=\alpha_{i}^{-1}$ for some $1 \leq i\leq d$, and $\alpha_i, \gamma_j, \alpha_k$ are pairwise distinct for all $1\leq i, j\leq d-d_1$ and $d-d_1+1 \leq k \leq d$. The principal divisor of the function $x^{\epsilon s} f(x)f^*(x)^{\lambda}$ in $K(x)$ is given by
	\begin{align*}
		(x^{\epsilon s} f(x)f^*(x)^{\lambda})_{K(x)}&=\epsilon s (P_0-P_{\infty})+ \sum_{i=1}^{d-d_1}P_{\alpha_i}- (d-d_1)P_{\infty}+ \lambda \sum_{j=1}^{d-d_1}P_{\gamma_j}\\
		&\quad -\lambda(d-d_1)P_{\infty}+(\lambda+1)\sum_{k=d-d_1+1}^{d}P_{\alpha_k}-d_1(\lambda+1)P_{\infty}\\
		&=\epsilon s P_0+ \sum_{i=1}^{d-d_1}P_{\alpha_i}+\lambda \sum_{j=1}^{d-d_1}P_{\gamma_j}+(\lambda+1)\sum_{k=d-d_1+1}^{d}P_{\alpha_k}\\
		&\quad  -(\epsilon s +d + \lambda d)P_{\infty}.
	\end{align*}
	This implies that the extension $K(x, y)/K(x)$ is a Kummer extension of degree $m$ and for a place $P$ of $K(x, y)$, the ramification index is given by
	$$ 
	e(P)= \left\{
	\begin{array}{ll}
		m/ (m, s), \hspace{5mm} & \mbox{if }P \text{ is over } P_0,\\
		m, \hspace{5mm} & \mbox{if }P \text{ is over } P_{\alpha_i} \text{ or }P_{\gamma_i}, \text{ for } i=1,\dots, d-d_1,\\
		m/(m,\lambda+1 ), \hspace{5mm} & \mbox{if }P \text{ is over } P_{\alpha_i}, \text{ for } i=d-d_1+1,\dots, d,\\
		m/ (m, \epsilon s+d+\ d\lambda), \hspace{5mm} & \mbox{if }P \text{ is over } P_{\infty},\\
		1,  \hspace{25mm} & \mbox{otherwise}.
	\end{array} \right .
	$$ 
	By  Riemann-Hurwitz formula, the genus $g$ of $K(x, y)$ satisfies
	$$2g-2=-2m+m-(m,s)+2(m-1)(d-d_1)+d_1(m-(m,\lambda+1))+m- (m, \epsilon s+d+\ d\lambda),$$
	which gives
	$$
	g=(m-1)d+1-\frac{(m, s)+ (m, \epsilon s+d+\ d\lambda)+d_1(m,\lambda + 1)+d_1(m-2)}{2}.
	$$
\end{proof}
In the subsequent sections, we investigate the number of $\fqs$-rational points on the curve (\ref{curveX}) for the cases  $\epsilon=-1$ and $\lambda=1$, and $\epsilon=1$ and $\lambda=-1$ separately. Note that the curves $\cX$ for $\epsilon=\lambda=1$ and  $\epsilon=\lambda=-1$ are isomorphic to the curves with $\epsilon=-1$ and $\lambda=1$, and $\epsilon=1$ and $\lambda=-1$ respectively.

\section{Curves over $\fqs$ from Section \ref{construction}: the case of $\epsilon=-1$ and $\lambda=1$.}\label{Section 4}

In this section, we restrict ourselves to the curve $\cX$ in (\ref{curveX}) with $\epsilon=-1$ and $\lambda=1$. We impose certain conditions on the polynomial $f \in \fq[x]$ to provide an explicit expression and a lower bound for the number of $\fqs$-rational points on the curve $\cX$. Moreover, for some of these algebraic curves, we compute the exact number of $\fqs$-rational points, see Theorem \ref{theo_N1}, Propositions \ref{propX1} and \ref{propX1_2}.

\begin{theorem}\label{theoX1}
Let $m \geq 2$ be a divisor of $q+1$, $f \in \fq[x]$ be a separable polynomial of degree $d$ satisfying $f(0)\neq 0$ and $(f, f^*)=1$, and $s$ be an integer $0\leq s<m$. Then the algebraic curve defined by 
\begin{equation}\label{curveX1}
\cX: \quad y^{m}=\frac{f(x)f^*(x)}{x^s}
\end{equation}
has genus 
$$g=(2md-2(d-1)-(m, s)-(m, 2d-s))/2.$$ 
Further if $(f, x^{q+1}-1)=1$, then the number of rational points $\#\cX(\fqs)$ over $\fqs$ satisfies
\begin{align*}
\#\cX(\fqs) &\geq m((q+1,2(d-s))+q-3-2N_{f}(\fq^*))+2N_f(\fqs).
\end{align*}
In particular, for $s=d$, we have 
$
\#\cX(\fqs) \geq 2m(q-1-N_f(\fq^*))+2N_f(\fqs).
$
\end{theorem}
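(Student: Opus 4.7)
For the genus, I would apply Proposition \ref{prop1} directly: the hypothesis $(f,f^*)=1$ forces $d_1=0$, and substituting $\epsilon=-1$, $\lambda=1$ turns $\epsilon s+d+d\lambda$ into $2d-s$, after which rearrangement gives the stated formula.

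For the point count, I plan to enumerate $\fqs$-rational places by the place $P_\alpha$ of $\fqs(x)$ they lie over, using Remark \ref{remark1} (valid over $\fqs$ since $m\mid q+1\mid q^2-1$). A consequence of $m\mid q+1$ used throughout is that $\fq^*\subseteq(\fqs^*)^m$. The central identity exploits that for $\alpha\in\fqs^*$ with $\alpha^{q+1}=1$ one has $\alpha^{-1}=\alpha^q$, so since $f\in\fq[x]$,
\[
\frac{f(\alpha)f^*(\alpha)}{\alpha^s}=\alpha^{-s}\cdot f(\alpha)\cdot\alpha^{d}f(\alpha^{q})=\alpha^{d-s}\,N_{\fqs/\fq}(f(\alpha)).
\]
Because the norm lies in $\fq^*\subseteq(\fqs^*)^m$, this expression is an $m$-th power in $\fqs^*$ if and only if $\alpha^{d-s}$ is.

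Writing $H:=\{\alpha\in\fqs^*:\alpha^{q+1}=1\}$, I would split the count into four parts: (i) the $(m,s)+(m,2d-s)\geq 0$ rational places over $P_0$ and $P_\infty$, since the relevant constants $a_0a_d\in\fq^*$ are automatically powers; (ii) the $2N_f(\fqs)$ places coming from the roots of $f$ and of $f^*$ in $\fqs$, each fully ramified and paired by the bijection $\alpha\leftrightarrow 1/\alpha$; (iii) $m(q-1-2N_f(\fq^*))$ places from the non-roots of $ff^*$ in $\fq^*$, since for such $\alpha$ one has $f(\alpha)f^*(\alpha)/\alpha^s\in\fq^*\subseteq(\fqs^*)^m$; and (iv) the $m$-fold contributions from those $\alpha\in H\setminus\fq^*$ with $\alpha^{d-s}\in(\fqs^*)^m$, where the hypothesis $(f,x^{q+1}-1)=1$ (and its mirror for $f^*$) rules out overlap with (ii). To bound (iv) from below, note that any $\alpha\in H$ satisfying $\alpha^{2(d-s)}=1$ has $\alpha^{d-s}=\pm 1\in(\fqs^*)^m$; the $2(d-s)$-torsion of the cyclic group $H$ of order $q+1$ has exactly $(q+1,2(d-s))$ elements, and after removing the at most two elements of $H\cap\fq^*\subseteq\{\pm 1\}$ (already counted in (iii)) at least $(q+1,2(d-s))-2$ qualifying $\alpha$ remain in $H\setminus\fq^*$. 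Summing the four contributions and dropping the nonnegative term $(m,s)+(m,2d-s)$ yields the claimed inequality, and the case $s=d$ is immediate from $(q+1,0)=q+1$.

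The main obstacle is the careful bookkeeping in step (iv): one must avoid double counting the (at most two) shared elements of $\fq^*\cap H\subseteq\{\pm 1\}$ between contributions (iii) and (iv). Using the $2(d-s)$-torsion as an easy-to-count subset of $\{\alpha\in H:\alpha^{d-s}\in(\fqs^*)^m\}$ absorbs this overlap cleanly and is what produces the precise $\gcd(q+1,2(d-s))$ appearing in the lower bound.
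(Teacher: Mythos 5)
Your argument is correct, and for the point count it takes a somewhat different route from the paper's. The genus computation is identical (both invoke Proposition \ref{prop1} with $d_1=0$). For the rational points, the paper works via the $m$-th power criterion $\bigl(f(\a)f^*(\a)/\a^s\bigr)^{(q^2-1)/m}=1$, factors the resulting condition as $h_1(\a)h_2(\a)=0$ with $h_1(x)=(f(x)f^*(x))^{q-1}-x^{s(q-1)}$, shows $h_1,h_2$ are coprime, and then bounds $N_{h_1}(\fqs)$ from below by exhibiting the roots $\beta$ with $\beta^{(q+1,2(d-s))}=1$ and the elements of $\fq^*$ off the zero set of $ff^*$ (subtracting $(q-1,2)$ for the overlap, versus your $2$ --- both land on the stated $q-3$). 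You instead bypass the auxiliary polynomials entirely: the identity $f(\a)f^*(\a)/\a^s=\a^{d-s}N_{\fqs/\fq}(f(\a))$ on the norm-one group $H$, combined with $\fq^*=(\fqs^*)^{q+1}\subseteq(\fqs^*)^m$, lets you apply Remark \ref{remark1} place by place. The underlying mechanism is the same (the relation $\beta^q=\beta^{-1}$ on $H$ and the reciprocal symmetry of $ff^*$; your restriction to the $2(d-s)$-torsion of $H$ is exactly the paper's set $\{\beta:\beta^{(q+1,2(d-s))}=1\}$), but your packaging is cleaner and more conceptual for the lower bound alone. What it does not give you is the exact-count machinery: the decomposition $N_{h_1}+N_{h_2}$ and the explicit factorization of $h_1$ are reused in Theorem \ref{theo_N1} and Propositions \ref{propX1_2} and \ref{propX1} to count \emph{all} qualifying $\a$, not just those in $H\cup\fq^*$, so the paper's formulation earns its keep later. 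One small point to make explicit if you write this up: the hypothesis $(f,x^{q+1}-1)=1$ also forces $(f^*,x^{q+1}-1)=1$ (roots of $f^*$ in $H$ are inverses of roots of $f$ in $H$), which you need for step (iv) and for placing $\pm1$ in step (iii).
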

\begin{proof}
A direct application of Proposition \ref{prop1} gives the genus of the curve defined in (\ref{curveX1}). We now provide an expression for the number of $\fqs $-rational points on this curve . Let $\alpha \in \fqs^*$ be such that $f(\a)f^*(\a)\neq 0$, then $\frac{f(\a)f^*(\a)}{\a^s}$ is a $m$-power in $\fqs$ if and only if
$
\left(\frac{f(\a)f^*(\a)}{\a^s}\right)^{\frac{q^2-1}{m}}=1,$
which is equivalent to
$$ \left(\left( \frac{f(\a)f^*(\a)}{\a^s}\right)^{q-1}-1\right)\left(\sum_{i=0}^{\frac{q+1}{m}-1}\left(\frac{f(\a)f^*(\a)}{\a^s}\right)^{(q-1)i}\right)=0, $$
that is,
$$((f(\a)f^*(\a))^{q-1}-\a^{s(q-1)})\left(\sum_{i=0}^{\frac{q+1}{m}-1}(f(\a)f^*(\a))^{(q-1)i}\a^{s(q-1)\left(\frac{q+1}{m}-1-i\right)}\right)=0.$$
Let
$$
h_1(x)=(f(x)f^*(x))^{q-1}-x^{s(q-1)}\quad \text{ and} \quad
h_2(x)=\sum_{i=0}^{\frac{q+1}{m}-1}(f(x)f^*(x))^{(q-1)i}x^{s(q-1)\left(\frac{q+1}{m}-1-i\right)}.$$
Then $h_1$ and $h_2$ are co prime polynomials. In fact, if $\a$ is a root of $h_1$, then $(f(\a)f^*(\a))^{q-1}=\a^{s(q-1)}$ and
\begin{align*}
h_2(\a)&=\sum_{i=0}^{\frac{q+1}{m}-1}(f(\a)f^*(\a))^{(q-1)i}\a^{s(q-1)\left(\frac{q+1}{m}-1-i\right)}\\
&=\sum_{i=0}^{\frac{q+1}{m}-1}\a^{s(q-1)i}\a^{s(q-1)\left(\frac{q+1}{m}-1-i\right)}\\
&=\left(\frac{q+1}{m}\right)\a^{s(q+1)\left(\frac{q+1}{m}-1\right) } \neq 0.
\end{align*}
It is also clear that $(h_1, ff^*)=(h_2, ff^*)=1$. 
We conclude that
 $$\#\{ \a \in \fqs^* \mid f(\a)f^*(\a)\neq 0 \text{ and }\frac{f(\a)f^*(\a)}{\a^s} \text{ is a } m\text{-th } \text{power in } \fqs^*\}=N_{h_1}(\fqs)+N_{h_2}(\fqs).$$
From Remark \ref{remark1}, each $\a \in \fqs$ such that $f(\a)f^*(\a)=0$ gives one rational point on the curve. From Remark \ref{remark1}, we also conclude that each one of $x=0$ and $x=\infty$ contributes $(m , s)$ and $(m, 2d-s)$ rational points on the curve respectively.
So the number of rational points on the curve $\cX$ is
\begin{equation}\label{NRPX1}
\# \cX (\fqs)=(m, s)+(m, 2d-s)+2N_f(\fqs)+m(N_{h_1}(\fqs)+N_{h_2}(\fqs)).
\end{equation}
 Now we assume that $(f, x^{q+1}-1)=1$. Note that for $\beta \in \{ x \in \fqs \mid x^{(q+1, 2(d-s))}=1\}$, we have $\beta^q=\beta^{-1}$ and thus we write
\begin{align*}
h_1(\beta)&=(f(\beta)f^*(\beta))^{q-1}-\beta^{s(q-1)}=\frac{(f(\beta)f^*(\beta))^q}{f(\beta)f^*(\beta)}-\beta^{-2s}\\&=\frac{f(\beta^q)f^*(\beta^q)}{f(\beta)f^*(\beta)}-\beta^{-2s}=\frac{f(1/\beta)f^*(1/\beta)}{f(\beta)f^*(\beta)}-\beta^{-2s}\\
&=\frac{f(\beta)f^*(\beta)}{\beta^{2d}f(\beta)f^*(\beta)}-\beta^{-2s}=\beta^{-2d}-\beta^{-2s}=0.
\end{align*}
Also, for $\beta \in \fq^*$ such that $f(\beta)f^*(\beta)\neq 0$, we have $h_1(\beta)=0$. Therefore
\begin{align*}
N_{h_1}(\fqs)&\geq (q+1,2(d-s))+q-1-2N_{f}(\fq^*)-(q-1, 2).
\end{align*}
Hence we get
$$
\# \cX (\fqs) \geq  2N_f(\fqs)+m((q+1,2(d-s))+q-3-2N_{f}(\fq^*)).
$$
\end{proof}

In what follows we compute the genus and the exact number of rational points for some families of algebraic curves as constructed in (\ref{curveX1}).
\begin{theorem}\label{theo_N1}
Let $b \in \fq^*, b^2\neq 1$ and $d$ be a positive divisor of $q+1$. Then the algebraic curve  defined by 
$$ 
\cX \,:\, y^{q+1}=\frac{bx^{2d}+(b^2+1)x^{d}+b}{x^{d}}
$$
has genus $$g=d(q-1)+1$$  and  its number of $\fqs$-rational points is given by
$$\# \cX(\fqs)=d(q^2-1)+(d, 2)(q+1)^2+4d-d(q+1)((q-1, 2)+2).
$$
In particular, if $q$ is odd, $d=2$ and $q\geq 17$, then this curve has many points.
\end{theorem}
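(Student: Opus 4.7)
The plan is to specialize Theorem \ref{theoX1} to the polynomial $f(x) = x^d + b$, whose reciprocal is $f^*(x) = bx^d + 1$, giving $f(x)f^*(x) = bx^{2d}+(b^2+1)x^d + b$ as required. First I would check the hypotheses: $f$ is separable of degree $d$, $f(0) = b \neq 0$, and $(f,f^*) = 1$ because any common root would satisfy both $\alpha^d = -b$ and $\alpha^d = -1/b$, forcing $b^2 = 1$, contrary to assumption. The genus then follows directly from Proposition \ref{prop1} with $\epsilon = -1$, $\lambda = 1$, $s = d$, $m = q+1$, $d_1 = 0$, and $(q+1, d) = d$ (since $d \mid q+1$), yielding $g = (q+1-1)d + 1 - (d+d)/2 = d(q-1)+1$.

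For the exact point count, the key observation is that with $m = q+1$ the polynomial $h_2(x)$ appearing in the proof of Theorem \ref{theoX1} reduces to $1$ (the sum indexed by $i$ has only the $i = 0$ term), so $N_{h_2}(\fqs) = 0$, and formula (\ref{NRPX1}) simplifies to $\#\cX(\fqs) = 2d + 2N_f(\fqs) + (q+1)N_{h_1}(\fqs)$. I would compute $N_f(\fqs) = d$ by noting that $x^d = -b$ has exactly $d$ solutions in $\fqs^*$: since $d \mid q+1$, the subgroup of $d$-th powers in $\fqs^*$ has index $d$ and order divisible by $q-1$, so it contains $\fq^*$, and thus $-b \in \fq^*$ is a $d$-th power.

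The main computation is $N_{h_1}(\fqs)$. For $\alpha \in \fqs^*$ with $f(\alpha)f^*(\alpha) \neq 0$, the condition $h_1(\alpha) = 0$ is equivalent to $g(\alpha) := b\alpha^d + (b^2+1) + b\alpha^{-d} \in \fq$. Applying the Frobenius, using $\alpha^{q^2} = \alpha$ and $b^q = b$, the equation $g(\alpha)^q = g(\alpha)$ factors as $b(\alpha^{dq} - \alpha^d)(1 - \alpha^{-d(q+1)}) = 0$, so the zero set is $A \cup B$ where $A = \{\alpha \in \fqs^* : \alpha^{d(q-1)} = 1\}$ and $B = \{\alpha \in \fqs^* : \alpha^{d(q+1)} = 1\}$. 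Standard cyclic group counting gives $|A| = d(q-1)$ and $|B| = (q+1)\gcd(q-1,d) = (q+1)(d,2)$ (the last equality follows from a short case analysis on parity, using $d \mid q+1$). Substituting $\beta = \alpha^d$, the intersection condition reduces to $\beta \in \fq^*$ with $\beta^2 = 1$, yielding $|A \cap B| = d(q-1,2)$. I would then show that the $2d$ roots of $ff^*$ in $\fqs$ all lie in $A$ (since $\alpha^d = -b \in \fq^*$) but none lie in $B$ (because $(-b)^{q+1} = b^2 \neq 1$), so $N_{h_1}(\fqs) = |A| + |B| - |A \cap B| - 2d$. Plugging in and collecting terms produces the closed-form expression $\#\cX(\fqs) = d(q^2-1) + (d,2)(q+1)^2 + 4d - d(q+1)((q-1,2)+2)$.

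For the final assertion, specializing to $d = 2$ and $q$ odd gives $\#\cX(\fqs) = 4q^2 - 4q$ and $g = 2q-1$. I would verify the "many points" condition by comparing $4q^2 - 4q$ with $L(q^2, 2q-1)$ from (\ref{condition_manYPoints1}), using the Hasse-Weil upper bound $U(q^2, g) \leq q^2 + 1 + 2gq = 5q^2 - 2q + 1$, which reduces to an elementary inequality in $q$ that is straightforward to check for $q \geq 17$. The main technical obstacle is the parity bookkeeping in computing $|B|$ and $|A \cap B|$ and ensuring the roots of $ff^*$ are correctly subtracted; the hypothesis $b^2 \neq 1$ is crucial precisely because it separates $A$ from $B$ at these roots.
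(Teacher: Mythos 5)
Your proposal is correct and follows essentially the same route as the paper: specialize Theorem \ref{theoX1} with $f(x)=x^d+b$, $s=d$, $m=q+1$, note $h_2\equiv 1$, and reduce the count of $N_{h_1}(\fqs)$ to counting the roots of $(x^{d(q+1)}-1)(x^{d(q-1)}-1)$ in $\fqs^*$ minus the $2d$ roots of $ff^*$. Your derivation of that factorization via the Frobenius condition $g(\alpha)^q=g(\alpha)$ and the inclusion--exclusion $|A|+|B|-|A\cap B|$ is just a repackaging of the paper's explicit identity $h_1=b(x^{d(q+1)}-1)(x^{d(q-1)}-1)/(ff^*)$ and its gcd computations, and all the resulting counts agree.
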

\begin{proof}
The curve $\cX$ corresponds to the construction in  (\ref{curveX1}) for $f(x)=x^d+b$, $s=d$ and $m=q+1$. Since $b^2\neq 1$, we have $(f, f^*)=1$. The genus of the curve follows from Theorem \ref{theoX1}. Now we compute the number of $\fqs$-rational points on the curve following the proof and notation as in Theorem \ref{theoX1}.
Since $b\in\fq^*$ and $d$ is a divisor of $q+1$, each one of $f$ and $f^*$ has $d$ distinct roots in $\fqs^*$ and therefore $N_{ff^*}(\fqs)=2d$. Note that each root of $ff^*$ contributes one rational point on the curve. From Remark \ref{remark1},  we also conclude that each one of $x=0$ and $x=\infty$ contributes $(q+1, d)=d$ rational points on the curve respectively.

For $\a \in \fqs^*$ and $ f(\a)f^*(\a)\neq 0$, we have
$$\#\{ \a \in \fqs^* \mid f(\a)f^*(\a)\neq 0 \text{ and }\frac{f(\a)f^*(\a)}{\a^d} \text{ is a } (q+1)\text{-power in } \fqs^*\}=N_{h_1}(\fqs)+N_{h_2}(\fqs)$$
where
$$h_1(x)=(f(x)f^*(x))^{q-1}-x^{d(q-1)}=\frac{b(x^{d(q+1)}-1)(x^{d(q-1)}-1)}{f(x)f^*(x)}\quad \text{and}\quad h_2(x)\equiv 1.$$
Clearly $N_{h_2}(\fqs)=0$. Next we show that the polynomial $h_1\in \fq[x]$ has
$d(q-1)+2(q+1)-4d$ distinct roots in $\fqs^*$. In fact, since 
\begin{align*}
	&(x^{d(q+1)}-1, x^{q^2-1}-1)=x^{(d, 2)(q+1)}-1,\\ 
	&(x^{d(q-1)}-1, x^{q^2-1}-1)=x^{d(q-1)}-1\\
 \textrm{and} \quad	&(x^{(d, 2)(q+1)}-1, x^{d(q-1)}-1)=x^{d(q-1, 2)}-1,
\end{align*}
we obtain $d(q-1)+(d, 2)(q+1)-d(q-1, 2)$ distinct roots of $(x^{d(q+1)}-1)(x^{d(q-1)}-1)$ in $\fqs^*$. Since $N_{ff^*}(\fqs)=2d$, we conclude that $h_1$ has
$d(q-1)+(d, 2)(q+1)-d(q-1, 2)-2d$ distinct roots in $\fqs^*$. 
Hence the number of $\fqs$-rational points on the curve $\cX$ is given by
\begin{align*}
\#\cX (\fqs)&=4d+(q+1)(d(q-1)+(d, 2)(q+1)-d(q-1, 2)-2d)\\
&=d(q^2-1)+(d, 2)(q+1)^2+4d-d(q+1)((q-1, 2)+2).
\end{align*}

Next we show that for $q$ odd, $d=2$ and $q \geq 17$ this curve has many points. By Remark \ref{remark_manYPoints}, a curve is considered to have many points if and only if $ L(q^2, g) \leq \# \cX(\fqs)$. From the Hasse-Weil bound, we have
$$
L(q^2, g)\leq \floor*{\frac{q^2+1+2 g q-q^2-1}{\sqrt{2}}}+q^2+1=\floor*{\sqrt{2}gq}+q^2+1 .
$$
In particular, algebraic curves satisfying $\sqrt{2}gq+q^2+1 \leq  \# \cX(\fqs)$ have many points. 
Therefore the curve $\cX$ has many points if
\begin{equation}\label{cond2}
\sqrt{2}q(d(q-1)+1)+q^2+1 \leq d(q^2-1)+(d, 2)(q+1)^2+4d-d(q+1)((q-1, 2)+2). 
\end{equation}
The condition (\ref{cond2}) is never satisfied when $q$ is even or when $q$ is odd and $d \neq 2$. For $q$ odd and $d=2$, this condition is satisfied if and only if $q\geq 17$.
\end{proof}

Note that for $b^2=1$ the curve $\cX$ in Theorem 4.2 is isomorphic to the curve with affine equation $y^{q+1}=(x^2+b)^2/x^d$. In order to complete the analysis of the curve $\cX$ given in Theorem \ref{theo_N1}, we study an absolutely irreducible component of curve obtained when $d$ is even in Proposition \ref{propX1_2} and for $d$ odd, we study this curve in Proposition \ref{propX1}.

\begin{proposition}\label{propX1_2} 
	Assume $q$ is odd. Let $d\geq 1$ be a positive integer such that $4d$ divides $q^2-1$ and $b\in \fq$ be  such that $b^2=1$. Then the algebraic curve $\cX$ defined by the affine equation 
	\begin{equation}\label{eqTheo_N2}
		y^{(q+1)/2}=\frac{x^{2d}+b}{x^{d}}
	\end{equation}
	has genus 
	$$g=\frac{d(q-1)+2-(2d, q+1)}2$$
	and its number of $\fqs$-rational points is given by
	$$
	\# \cX (\fqs) = \frac{(q+1)^2(2d, q-1)+(q^2+1)(2d, q+1)-2d(3q+1)}{2}.
	$$
	In particular, this curve is maximal over $\fqs$ if and only if $(2d, q+1)+(2d, q-1)=2(d+1)$.
\end{proposition}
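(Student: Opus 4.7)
The plan is to apply Riemann-Hurwitz to the Kummer extension of degree $m := (q+1)/2$ defined by the given equation, then enumerate $\fqs$-rational points by splitting over the places of $\mathbb{P}^1$, and finally read off the maximality condition from the Hasse-Weil bound. For the genus: $x^{2d}+b$ is separable of degree $2d$ (since $4d\mid q^2-1$ forces $p\nmid 2d$), its $2d$ roots all lie in $\fqs$ (because the hypothesis makes $-b \in \{\pm 1\}$ a $2d$-th power in $\fqs^*$), and the divisor of $h(x) := (x^{2d}+b)/x^d$ in $K(x)$ is $\sum_{i=1}^{2d} P_{\alpha_i} - d(P_0 + P_\infty)$. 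Kummer ramification is total at each $P_{\alpha_i}$ and equals $m/(m,d)$ at $P_0, P_\infty$; Riemann-Hurwitz gives $g = d(m-1) + 1 - (m,d)$, which matches the stated formula once one notes that $2(m,d) = (2d, q+1)$.

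For the point count, Remark \ref{remark1} shows that the $2d$ roots of $x^{2d}+b$ each give one rational point and that $P_0, P_\infty$ together contribute $2(m,d) = (2d, q+1)$ points. The essential step is to count the $\alpha \in \fqs^*$ with $\alpha^{2d}+b \neq 0$ for which $h(\alpha)$ is an $m$-th power in $\fqs^*$; each such $\alpha$ contributes $m$ points. Since $(q^2-1)/m = 2(q-1)$, this condition is equivalent to $h(\alpha)^2 \in \fq^*$, and using $b^2 = 1$ it simplifies to $\gamma + \gamma^{-1} \in \fq$ with $\gamma := \alpha^{2d}$. Applying Frobenius to $\gamma + \gamma^{-1} = \gamma^q + \gamma^{-q}$ and clearing denominators produces $(\gamma^q - \gamma)(\gamma^{q+1} - 1) = 0$, so $\alpha$ must satisfy $\alpha^{2d(q-1)} = 1$ or $\alpha^{2d(q+1)} = 1$.

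These two subgroups of $\fqs^*$ have sizes $(q-1)(2d, q+1)$ and $(q+1)(2d, q-1)$, and their intersection is exactly the group of $4d$-th roots of unity (since $\gcd(q-1, q+1) = 2$ yields $\gcd(2d(q-1), 2d(q+1)) = 4d$, and $4d \mid q^2-1$ ensures this group has cardinality $4d$ in $\fqs^*$). Subtracting the $2d$ solutions of $\alpha^{2d} = -b$ (all of which lie in this intersection since $\alpha^{4d} = b^2 = 1$) yields $N = (q-1)(2d, q+1) + (q+1)(2d, q-1) - 6d$. Summing the three contributions and simplifying gives $\#\cX(\fqs)$ in the announced form. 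For maximality, equating this with the Hasse-Weil bound $q^2+1+2gq$ and using $(q+1)^2 = q^2+2q+1$ collapses the equation to $(q+1)^2\bigl[(2d, q+1) + (2d, q-1)\bigr] = 2(d+1)(q+1)^2$, i.e.\ $(2d, q+1)+(2d, q-1) = 2(d+1)$. I expect the main obstacle to be the inclusion-exclusion argument, specifically identifying the intersection of the two subgroups with the full set of $4d$-th roots of unity (where the hypothesis $4d\mid q^2-1$ is crucial) and verifying that the $2d$ elements with $\alpha^{2d}=-b$ all sit in this intersection.
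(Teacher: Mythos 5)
Your proposal is correct and follows essentially the same route as the paper: Kummer ramification for the genus, the observation that $h(\alpha)^{2(q-1)}=1$ reduces to $(\alpha^{2d(q+1)}-1)(\alpha^{2d(q-1)}-1)=0$, an inclusion--exclusion count (the paper phrases it via gcds of polynomials $x^N-1$, you via subgroup orders, which is the same computation), removal of the $2d$ roots of $x^{2d}+b$, and comparison with the Hasse--Weil bound for maximality. Your intermediate reformulation via $\gamma+\gamma^{-1}\in\fq$ is a cosmetic variant that lands on the identical key identity, so there is nothing substantive to flag.
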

\begin{proof}
	By Remark \ref{remark1}, each one of the points $x=0$ and $x=\infty$ contributes with $(d, \frac{q+1}{2})$ rational points on the curve. Now we consider the roots of $x^{2d}+b$. Since $4d\mid q^2-1$, we have 
	$\# \{ \a \in \fqs \mid \a^{2d}+b=0\}=2d$ and each root of $x^{2d}+b$ contributes with one rational point.
	On the other hand, for $\a\in \fqs^*$ such that $\a^{2d}+b\neq 0$, we have 
	\begin{align}\label{cond1}
		\frac{\al^{2d}+b}{\alpha^{d}} \text{ is a } \frac{(q+1)}{2}\text{-power in }\fqs^* 
		&\Leftrightarrow \left(\frac{\a^{2d}+b}{\a^{d}}\right)^{2(q-1)} = 1\\ \nonumber
		&\Leftrightarrow (\a^{2d(q+1)}-1)(\alpha^{2d(q-1)}-1)=0. \nonumber
	\end{align}
	Since
	\begin{align*}
		&(x^{2d(q+1)}-1, x^{q^2-1}-1)=x^{(q+1)(2d, q-1)}-1,\\ &(x^{2d(q-1)}-1, x^{q^2-1}-1)=x^{(q-1)(2d, q+1)}-1\\
		\textrm{and} \quad &(x^{(q+1)(2d, q-1)}-1, x^{(q-1)(2d, q+1)}-1)=x^{4d}-1,
	\end{align*}
	we obtain that there are $(q+1)(2d, q-1)+(q-1)(2d, q+1)-4d$ elements $\alpha \in \fqs^*$ satisfying (\ref{cond1}).
	Also, since 
	$$
	x^{2d}+b \mid (x^{2d(q+1)}-1)(x^{2d(q-1)}-1),
	$$
	we conclude that the polynomial $(x^{2d(q+1)}-1)(x^{2d(q-1)}-1)$ has $(q+1)(2d, q-1)+(q-1)(2d, q+1)-6d$ distinct roots in $\fqs^*\setminus \{\al \in \fqs^* : \a^{2d}+b=0 \}$. Consequently, 
	\begin{align*}
		\# \cX (\fqs) &= 2d+2(d, \frac{q+1}{2})+\frac{q+1}{2}((q+1)(2d, q-1)+(q-1)(2d, q+1)-6d)\\
		&=\frac{(q+1)^2(2d, q-1)+(q^2+1)(2d, q+1)-2d(3q+1)}{2}.
	\end{align*}
	Finally, we note that
	$$
	\# \cX (\fqs)-q^2-1-2gq=\frac{(q+1)^2}{2}((2d, q-1)+(2d, q+1)-2-2d).
	$$
	This completes the proof.
\end{proof}

\begin{proposition}\label{propX1}
Assume $q$ is odd. Let $d\geq 1$ be an odd integer such that $p\nmid d$ and $b\in \fq$ such that $b^2=1$. Then the algebraic curve $\cX$ defined by the equation
\begin{equation}\label{curveX11}
y^{q+1}=\frac{(x^d+b)^2}{x^d}\\
\end{equation}
has genus 
$$g=\frac{d(q-1)+2-2(d, q+1)}2$$ 
and its number of $\fqs$-rational points is given by
$$
\# \cX(\fqs)=(q^2+1)(d, q+1)+(q+1)^2(d, q-1)-(3q+1)(d, q^2-1).
$$
In particular,  for a divisor $d$ of $q^2-1$, the curve $\cX$ is $\fqs$-maximal if and only if either $(d,q+1)=1$  or $(d,q-1)=1$.
\end{proposition}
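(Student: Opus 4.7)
The plan is: compute the genus via Riemann--Hurwitz applied directly to the Kummer extension (Proposition \ref{prop1} does not apply, since $b^2=1$ forces $f^*=bf$ and hence $d_1=d$); enumerate $\fqs$-rational places using Remark \ref{remark1}; then verify the maximality criterion by algebraic simplification.

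For the genus: $f(x)=x^d+b$ is separable with $d$ distinct roots $\alpha_1,\ldots,\alpha_d$ in $\overline{\fq}$, and the principal divisor of $h(x)=(x^d+b)^2/x^d$ in $K(x)$ is $2\sum_i P_{\alpha_i}-dP_0-dP_\infty$. In the Kummer extension $K(x,y)/K(x)$ of degree $q+1$, the ramification indices are $(q+1)/2$ above each $P_{\alpha_i}$ and $(q+1)/(d,q+1)$ above $P_0$ and $P_\infty$, all other places being unramified; Riemann--Hurwitz then yields the stated genus. For the point count, I apply Remark \ref{remark1} place by place. The places $P_0$ and $P_\infty$ each contribute $(d,q+1)$ rational places (the local constants $b^2=1$ and $1$ are trivially powers). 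At each root $\alpha_i\in\fqs$ of $f$ the multiplicity in $h$ is $2$, and a direct computation with $f'(\alpha_i)=d\alpha_i^{d-1}$ and $\alpha_i^d=-b$ gives $g_{\alpha_i}(\alpha_i)=-b(d/\alpha_i)^2$; since $-b\in\{\pm1\}$ is always a square in $\fqs^*$ for $q$ odd (using $4\mid q^2-1$), this yields $(q+1,2)=2$ places per root. As $-b$ is always a $d$-th power in $\fqs^*$ (trivially for $b=-1$; via $-1=(-1)^d$ for $b=1$, $d$ odd), there are $(d,q^2-1)$ such roots, contributing $2(d,q^2-1)$ places.

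The main calculation concerns $\alpha\in\fqs^*$ with $\alpha^d+b\neq 0$: for such $\alpha$, $(\alpha^d+b)^2/\alpha^d$ is a $(q+1)$-th power in $\fqs^*$ iff it lies in $\fq^*=(\fqs^*)^{q+1}$. Setting $t=\alpha^d$ and using $b\in\fq$, this reduces to $t+t^{-1}\in\fq$. Cross-multiplying $t^q+t^{-q}=t+t^{-1}$ by $t^q$ and factoring gives the key identity $(t^{q-1}-1)(t^{q+1}-1)=0$, so $\alpha^{d(q-1)}=1$ or $\alpha^{d(q+1)}=1$. Inclusion-exclusion with $(d(q-1),q^2-1)=(q-1)(d,q+1)$, $(d(q+1),q^2-1)=(q+1)(d,q-1)$, and $(2d,q^2-1)=2(d,q^2-1)$ (valid since $d$ is odd) counts $(q-1)(d,q+1)+(q+1)(d,q-1)-2(d,q^2-1)$ such $\alpha$; removing the $(d,q^2-1)$ roots of $f$ (which all satisfy $\alpha^{2d}=1$, hence both conditions) and multiplying by $q+1$ for the $y$-values gives the $\fqs^*$-contribution. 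Summing all contributions and collecting terms produces the claimed formula.

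For maximality with $d\mid q^2-1$: writing $a=(d,q-1)$ and $c=(d,q+1)$, the oddness of $d$ together with $(q-1,q+1)=2$ forces $(a,c)=1$, hence $ac=d=(d,q^2-1)$. Substituting into both $\#\cX(\fqs)$ and $q^2+1+2gq$ and simplifying yields $\#\cX(\fqs)-(q^2+1+2gq)=-(a-1)(c-1)(q+1)^2$, which vanishes iff $a=1$ or $c=1$. The main obstacle will be the algebraic reduction in the $\fqs^*$ count: spotting that the $(q+1)$-th power condition collapses to $t+t^{-1}\in\fq$, and that the resulting Frobenius equation factors cleanly as $(t^{q-1}-1)(t^{q+1}-1)=0$. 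A secondary subtle point that is easy to miss is that each root of $f$ contributes $(q+1,2)=2$ places (not $1$); this doubling is precisely what converts a naive coefficient of $3q+2$ into the correct $3q+1$.
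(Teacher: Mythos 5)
Your proof is correct and follows essentially the same route as the paper, which itself only sketches the argument by analogy with Propositions \ref{prop1} and \ref{propX1_2}: Riemann--Hurwitz for the genus, Remark \ref{remark1} place by place, the factorization of the $(q+1)$-power condition into $(\alpha^{d(q-1)}-1)(\alpha^{d(q+1)}-1)=0$ with the same gcd bookkeeping, and the identical algebraic reduction $-(q+1)^2((d,q+1)-1)((d,q-1)-1)$ for maximality. You correctly handle the two points the paper leaves implicit — that Proposition \ref{prop1} does not literally apply since $d_1=d$, and that each root of $x^d+b$ contributes $(q+1,2)=2$ places rather than one.
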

\begin{proof}
The computation of the genus is analogous to the one in Proposition \ref{prop1} and the computation of the number of $\fqs$-rational points on the curve is analogous to the proof of Proposition \ref{propX1_2}.  For a divisor $d$ of $q^2-1$, we have
\begin{align*}
\# \cX (\fqs)-q^2-1-2gq&=(q^2+1)(d, q+1)+(q+1)^2(d, q-1)-(3q+1)d-q^2-1\\
&\quad -dq(q+1)+2dq+2q(d, q+1)-2q\\
&= (q+1)^2((d, q+1)+(d, q-1)-1)-d(q+1)^2\\
&= (q+1)^2((d, q+1)+(d, q-1)-1-d)\\
&= -(q+1)^2((d, q+1)-1)((d, q-1)-1).
\end{align*}
This completes the proof.
\end{proof}
\begin{remark}The curve in Proposition \ref{propX1} is isomorphic to the curve $$y^{q+1}=x^{q+1-d}(x^d+b)^2.$$ We point out that for some values of $d$ (for instance, when $d$ is a divisor of $q+1$), this curve has been appeared in \cite[Example 6.4 (case 2)]{GSX2000} as a subcover of the Hermitian curve over $\fqs$ given by 
$$y^{m_1}=(-1)^kx^{bm}(x^m+1)^k$$
where $m$, $m_1$ are divisors of $q+1$, and $k$, $b$ are positive integers. 
\end{remark}


We now provide examples of curves with many points from the constructions obtained in this section. 
\begin{example}\label{Example1}
Let $f(x)=x+b$ where $b \in \fq^*$ such that $ b^2\ne 1$. In the following tables, we list $q,m,b,s,g$ and $\# \cX(\fqs)$ where $m, s,\cX$ and $f$ satisfy the hypothesis of Theorem \ref{theoX1}.

		\begin{minipage}[c]{0.4\textwidth}
			\centering
			\captionof*{table}{\bf{Meet record}}
			\begin{tabular}{
|>{\centering\arraybackslash}p{0.5cm}|>{\centering\arraybackslash}p{0.5cm} |>{\centering\arraybackslash}p{0.5cm} |>{\centering\arraybackslash}p{0.5cm} |>{\centering\arraybackslash}p{0.5cm} |>{\centering\arraybackslash}p{1.6cm} |
}	
\hline 
			$q$ & $m$ & $b$ & $s$ & $g$ & $\# \cX(\fqs)$\\ \hline           
$3^2$   &  $5$ & $\xi_{3^2}^2$ & $3$  &  $4$  &  $154^\dagger$ \\ \hline
$3^2$   &  $10$ & $\xi_{3^2}^2$ & $4$  &  $8$  &  $226^\dagger$ \\ \hline
			$5$  &   $6$  &  $2$ &  $4$ &  $4$ & $66^\dagger$ \\ \hline
			$5^2$  &   $13$  & $\xi_{5^2}$ & $0$ & $6$ &  $926^\dagger$ \\ \hline
			$5^2$  &   $13$  & $2$ & $1$ & $12$ &  $1226^\dagger$ \\ \hline
			
$5^2$  &   $26$  & $2$ & $4$ & $24$ &  $1826^\dagger$ \\ \hline
			$7$ & $4$ & $2$ & $3$ &  $3$ & $92^\dagger$ \\ \hline
						$7^2$ & $5$ & $\xi_{7^2}^3$ & $1$ &  $4$ & $2794^\dagger$ \\ \hline
			$7^2$ & $10$ & $3$ & $4$ &  $8$ & $3186^\dagger$ \\ \hline
			$7^2$ & $25$ & $\xi_{7^2}$ & $0$ &  $12$ & $3578^\dagger$ \\ \hline
			$7^2$ & $50$ & $\xi_{7^2}^{12}$ & $10$ &  $44$ &$6714^\dagger$ \\ \hline
			$7^2$ & $50$ & $\xi_{7^2}^{12}$ & $4$ &  $48$ & $7106^\dagger$ \\ \hline
$13$ & $14$ & $2$ & $0$ &  $6$ & $326^\dagger$ \\ \hline
			$13$ & $14$ & $5$ & $4$ &  $12$ & $482^\dagger$ \\ \hline
		\end{tabular}
		\end{minipage}
		\hfill
		\begin{minipage}[c]{0.6\textwidth}
		\centering
			\captionof*{table}{\bf{Meet record}}
			\begin{tabular}{
|>{\centering\arraybackslash}p{0.5cm}|>{\centering\arraybackslash}p{0.5cm} |>{\centering\arraybackslash}p{0.5cm} |>{\centering\arraybackslash}p{0.5cm} |>{\centering\arraybackslash}p{0.5cm} |>{\centering\arraybackslash}p{1.6cm} |
}	
\hline 
			$q$ & $m$ & $b$ & $s$ & $g$ & $\# \cX(\fqs)$\\ \hline           
			$13^2$ &  $10$ &  $\xi_{13^2}^4$ &  $3$  &  $9$& $31504$\\ \hline
			$13^2$ &  $34$ &  $8$  & $4$  &  $32$ &   $39378^\dagger$ \\ \hline
			$17$ &  $18$ &  $2$  & $0$  &  $8$ &   $562^\dagger$ \\ \hline
			$17$ &  $18$ &  $4$  & $6$  &  $14$ &   $766^\dagger$ \\ \hline
			$17$ &  $18$ &  $4$  & $4$  &  $16$ &   $834^\dagger$ \\ \hline
			$19$ &  $5$ &  $2$  & $0$  &  $2$ &   $438^\dagger$ \\ \hline
			$19$ &  $20$ &  $2$  & $0$  &  $9$ &   $704^\dagger$ \\ \hline
		\end{tabular}
		\vspace{1cm}
			\centering
			\captionof*{table}{\bf{New entry}}			
			\begin{tabular}
{
|>{\centering\arraybackslash}p{0.5cm}|>{\centering\arraybackslash}p{0.5cm} |>{\centering\arraybackslash}p{0.5cm} |>{\centering\arraybackslash}p{0.5cm} |>{\centering\arraybackslash}p{0.5cm} |>{\centering\arraybackslash}p{1.6cm} |
>{\centering\arraybackslash}p{1.6cm} |
}				
			\hline
				$q$ & $m$ & $b$ & $s$ & $g$ & $\# \cX(\fqs)$ & $\text{OLB}$\\ \hline 
				$ 7^2 $ & $ 50 $ & $ \xi_{7^2}^{4} $ & $5$ & $ 47 $ & $ 5708 $ & $ 5658 $\\ \hline
		  \end{tabular}
		\end{minipage}
\end{example}

\begin{example}\label{Example2}
	Let $f(x)=x^2+b$ where $b \in \fq^*$ such that $b^2\ne 1$. We list $q,m,b,s,g$ and $\# \cX(\fqs)$ in the following tables where $m, s,\cX$ and $f$ satisfy the hypothesis of Theorem \ref{theoX1}. We note that if $m=q+1$ and $s=d$ in the following tables, then the genus $g$ and the number of $\fqs$-rational points $\# \cX(\fqs)$ satisfies Theorem \ref{theo_N1}.
	
		\begin{minipage}[c]{0.4\textwidth}
		\centering
		\captionof*{table}{\bf{Meet record}}
		\begin{tabular}
{
|>{\centering\arraybackslash}p{0.5cm}|>{\centering\arraybackslash}p{0.5cm} |>{\centering\arraybackslash}p{0.5cm} |>{\centering\arraybackslash}p{0.5cm} |>{\centering\arraybackslash}p{0.5cm} |>{\centering\arraybackslash}p{1.6cm} |
}		
		\hline
			$q$ & $m$ & $b$ & $s$ & $g$ & $\# \cX(\fqs)$\\ \hline     
			$3^2$ &  $5$ & $\xi_{3^2}$ &  $0$ &   $6$ &  $190^\dagger$\\ \hline           
			$3^2$ & $10$ & $\xi_{3^2}$ & $2$ & $17$ & $288$ \\ \hline   
			$5$ &  $6$ &  $2$ &  $5$ &   $10$ &   $126^\dagger$ \\ \hline
			$5^2$ &  $26$ &  $\xi_{5^2}$ &  $2$ &   $49$ &   $2400$ \\ \hline      
			$7^2$ &  $5$ &  $\xi_{7^2}^3$ &  $0$ &   $6$ &   $2990^\dagger$ \\ \hline           
			$7^2$ &  $25$ &  $3$ &  $0$ &   $36$ &   $5930^\dagger$ \\ \hline               
			$11$ & $2$ & $3$ & $1$ & $2$ & $166^\dagger$ \\ \hline  
			$11$ & $3$ & $3$ & $2$ & $4$ & $210^\dagger$ \\ \hline  
			$11$ &  $6$ &  $3$ &  $5$ &   $10$ &   $342^\dagger$ \\ \hline     
	    	$13$ &  $2$ &  $5$ &  $1$ &   $2$ &   $222^\dagger$ \\ \hline     
			$13$ &  $14$ &  $2$ &  $2$ &   $25$ &   $624$ \\ \hline       
			$13$ &  $14$ &  $5$ &  $9$ &   $26$ &   $846^\dagger$ \\ \hline
			$13^2$ & $5$ & $8$ & $2$ & $8$ & $31266^\dagger$ \\ \hline
			$13^2$ & $10$ & $5$ & $2$ & $17$ & $34208$ \\ \hline
			$17$ & $2$ & $3$ & $1$& $2$ & $358^\dagger$ \\ \hline     
			$17$ & $3$ & $3$ & $2$& $4$ & $426^\dagger$ \\ \hline
		    $17$ &  $6$ &  $3$ &  $5$ &   $10$ &   $630^\dagger$ \\ \hline 
			$17$ &  $9$ &  $5$ &  $0$ &   $12$ &   $698^\dagger$ \\ \hline
		\end{tabular}
	\end{minipage}
\hfill
\begin{minipage}[c]{0.6\textwidth}	
\centering
		\captionof*{table}{\bf{Meet record}}
		\begin{tabular}
{
|>{\centering\arraybackslash}p{0.5cm}|>{\centering\arraybackslash}p{0.5cm} |>{\centering\arraybackslash}p{0.5cm} |>{\centering\arraybackslash}p{0.5cm} |>{\centering\arraybackslash}p{0.5cm} |>{\centering\arraybackslash}p{1.6cm} |
}		
		\hline
			$q$ & $m$ & $b$ & $s$ & $g$ & $\# \cX(\fqs)$\\ \hline     
			$17^2$ & $5$ & $4$ & $2$ & $8$ & $88146^\dagger$\\ \hline        
			$19$ &  $5$ &  $4$ &  $0$ &   $6$ &   $590^\dagger$ \\ \hline           
			$19$ &  $10$ &  $14$ &  $5$ &   $16$ &   $970^\dagger$ \\ \hline     
		\end{tabular}
		\vspace{0.7cm}
		\centering
		\captionof*{table}{\bf{New entry}}			
		\begin{tabular}
{
|>{\centering\arraybackslash}p{0.5cm}|>{\centering\arraybackslash}p{0.5cm} |>{\centering\arraybackslash}p{0.5cm} |>{\centering\arraybackslash}p{0.5cm} |>{\centering\arraybackslash}p{0.5cm} |>{\centering\arraybackslash}p{1.6cm} |
>{\centering\arraybackslash}p{1.6cm} |
}		
		\hline
			$q$ & $m$ & $b$ & $s$ & $g$ & $\# \cX(\fqs)$ & $\text{OLB}$\\ \hline  
		$7^2$ & $25$ & $\xi_{7^2}^3$ & $5$ & $46$ & $6910^\dagger$ & $5589$\\ \hline     
		$13^2 $ & $ 34 $ & $\xi_{13^2}^5$ & $0$ & $49$ & $41112 $ & $40273$\\ \hline
			$17 $ & $ 18 $ & $2$ & $2$ & $ 33 $ & $ 1088 $ & $1083$\\ \hline 
			$17^2 $ & $ 10 $ & $5$ & $2$ & $17$ & $92928 $ & $90470$\\ \hline   
		$19 $ & $ 20 $ & $2$ & $2$ & $ 37 $ & $ 1368 $ & $1356$\\ \hline 
		\end{tabular}
		\vspace{0.7cm}
		\centering
		\captionof*{table}{\bf{New record}}
		\begin{tabular}
{
|>{\centering\arraybackslash}p{0.5cm}|>{\centering\arraybackslash}p{0.5cm} |>{\centering\arraybackslash}p{0.5cm} |>{\centering\arraybackslash}p{0.5cm} |>{\centering\arraybackslash}p{0.5cm} |>{\centering\arraybackslash}p{1.6cm} |
>{\centering\arraybackslash}p{1.6cm} |
}		
		\hline
			$q$ & $m$ & $b$ & $s$ & $g$ & $\# \cX(\fqs)$ & $\text{OLB}$\\ \hline 
			$7^2$ & $ 10 $ & $ \xi_{7^2}^3 $ & $0$ & $ 13 $ & $ 3576 $ & $3258 $\\ \hline
		$7^2$ & $ 10 $ & $ \xi_{7^2}^3 $ & $2$ & $ 17 $ & $ 3968 $ & $3808$\\ \hline
		\end{tabular}
	\end{minipage}
\end{example}

\begin{example}\label{exmax2}
Let $f(x)=x^3+b \in \fq[x]$, $m \geq 2$ be a divisor of $q+1$ and $s$ be an integer $0\leq s<m$. We consider the algebraic curve defined by 
\begin{equation*}
	\cX: \quad y^{m}=\frac{f(x)f^*(x)}{x^s}.
\end{equation*}
The following tables consists of $q,m,b,s,g$ and $\# \cX(\fqs)$ which leads to meet record/new entry in the manYPoints table in \cite{MP2009}. Further, if $m=q+1$, $s=d$ and $b^2=1$ in the following tables, then the genus $g$ and the number of $\fqs$-rational points $\# \cX(\fqs)$ satisfies Proposition \ref{propX1}.

	\begin{minipage}[c]{0.4\textwidth}
		\centering
		\captionof*{table}{\bf{Meet record}}
		\begin{tabular}
{
|>{\centering\arraybackslash}p{0.5cm}|>{\centering\arraybackslash}p{0.5cm} |>{\centering\arraybackslash}p{0.5cm} |>{\centering\arraybackslash}p{0.5cm} |>{\centering\arraybackslash}p{0.5cm} |>{\centering\arraybackslash}p{1.6cm} |
>{\centering\arraybackslash}p{1.6cm} |
}		
		\hline
			$q$ & $m$ & $b$ & $s$ & $g$ & $\# \cX(\fqs)$\\ \hline
			$17$ & $18$  & $4$ & $0$ & $40$ &    $1650^\dagger$ \\    \hline           	
		\end{tabular}
	\end{minipage}
	\hfill
	\begin{minipage}[c]{0.6\textwidth}
		\centering
		\captionof*{table}{\bf{New entry}}			
		\begin{tabular}
{
|>{\centering\arraybackslash}p{0.5cm}|>{\centering\arraybackslash}p{0.5cm} |>{\centering\arraybackslash}p{0.5cm} |>{\centering\arraybackslash}p{0.5cm} |>{\centering\arraybackslash}p{0.5cm} |>{\centering\arraybackslash}p{1.6cm} |
>{\centering\arraybackslash}p{1.6cm} |
}		
		\hline
			$q$ & $m$ & $b$ & $s$ & $g$ & $\# \cX(\fqs)$ & $\text{OLB}$\\
			\hline
			$5^2$ & $13$ &  $1$  &   $3$ & $18$ &   $1526^\dagger$   &   $1262$   \\ \hline
			$5^2$ & $26$ &  $1$  &   $3$ & $36$ &   $2426^\dagger$   &   $1898$   \\ \hline
			$7^2$ & $10$ & $\xi_{7^2}^2$ &   $8$ & $26$ &  $4444$    &  $4203$   \\ \hline
			$7^2$ & $10$ & $\xi_{7^2}^3$	&   $3$ & $27$ & $4748$     & $4273$    \\ \hline
			$13^2$& $10$ & $\xi_{{13}^2}^2$ & 	$8$ & $26$ & $36604$    & $34776$  \\
			\hline 
		\end{tabular}
		\centering
	\end{minipage}
\end{example}

\begin{remark}\label{novamaximal}
For $q=5^2$ in Example \ref{exmax2}, we obtain an explicit equation for a maximal curve of genus $36$ over $\mathbb{F}_{5^4}$ given by $y^{26}=\frac{(x^3+1)^2}{x^3}$. The covered curve $y^{13}=\frac{(x^3+1)^2}{x^3}$ of genus $18$ also provides a maximal curve. Moreover, in the Example \ref{Example2} we get a new maximal curve over $\mathbb{F}_{7^4}$ of genus $46$. These genera already appeared in \cite{DO2015} as the genus of a curve covered by the Hermitian curve.

These three examples of explicit maximal curves are new entries in manYPoints table \cite{MP2009} and rises a natural question, to decide if  these curves are or not covered by the Hermitian curve. 
\end{remark}

\begin{example}\label{Example8}
	Let $f(x)=x^4+b$ where $b \in \fq^*$ such that $ b^2\ne 1$. In the following tables, we list $q,m,b,s,g$ and $\# \cX(\fqs)$ where $m, s,\cX$ and $f$ satisfy the hypothesis of Theorem \ref{theoX1}.
	
	\begin{table}[h!]
		\centering
		\captionof*{table}{\bf{Meet record}}
		\begin{tabular}
{
|>{\centering\arraybackslash}p{0.5cm}|>{\centering\arraybackslash}p{0.5cm} |>{\centering\arraybackslash}p{0.5cm} |>{\centering\arraybackslash}p{0.5cm} |>{\centering\arraybackslash}p{0.5cm} |>{\centering\arraybackslash}p{1.6cm} |
}		
		\hline
			$q$ & $m$ & $b$ & $s$ & $g$ & $\# \cX(\fqs)$\\ \hline 
			$3^2$  & $10$ &  $\xi_{3^2}^2$ & $9$ & $36$ &   $730^\dagger$ \\    \hline
			$5^2$ &  $2$ &    $2$ & $1$ &  $4$ &   $826^\dagger$ \\    \hline
			$5^2$ & $13$ &    $2$ & $4$ & $48$ &  $3026^\dagger$ \\    \hline
			$11^2$&  $2$ & $\xi_{{11}^2}^{30}$ & $1$ &  $4$ & $15610^\dagger$ \\
			\hline           
			$17$ & $3$ & $4$ & $4$ & $8$ & $562^\dagger$ \\ \hline
			$17$ &  $9$ & $4$ & $4$ & $32$ &  $1378^\dagger$ \\    \hline 
		\end{tabular}
	\end{table}
\end{example}
Next we provide some more examples of curve with many points.

\begin{example}\label{Example9}
	In the following tables, we list $q,m,b,s,g$ and $\# \cX(\fqs)$ where $m, s,\cX$ and $f\in \fq[x]$ satisfy the hypothesis of Theorem \ref{theoX1}.
	
\begin{table}[h!]
	\centering
	\captionof*{table}{\bf{Meet record}}
	\begin{tabular}{|c|c|c|c|c|c|}\hline
		$q$ & $m$ & $f$ & $s$ & $g$ & $\# \cX(\fqs)$\\ \hline  
		$2$  & $3$ &    $x^3+x+1$ & $0$ & $4$ &   $15^\dagger$ \\     \hline
		$3$  & $4$ &  $x^2+2x+2$ & $0$ & $3$ &   $28^\dagger$ \\    \hline
		$3^2$ & $5$ & $x^4+x^2+2$ & $4$ & $16$ & $370^\dagger$  \\ \hline
		$7$  & $4$ &  $x^4+x^2+5$ & $0$ & $5$ &   $120^\dagger$ \\    \hline
		$7$  & $8$ &  $x^2+3x+3$ & $6$ & $9$ &   $176^\dagger$ \\    \hline
		$7^2$ & $5$ & $x^4+2x^2+3$ & $4$ & $16$ & $3970^\dagger$ \\ \hline
		$11$  & $6$ &  $x^2+3x+10$ & $0$ & $7$ &   $276^\dagger$ \\    \hline
		$11$  & $6$ &  $x^2+3x+10$ & $2$ & $9$ &   $320^\dagger$ \\    \hline
		$11$  & $12$ &  $x^2+3x+10$ & $0$ & $15$ &   $452^\dagger$ \\    \hline
		$11$  & $12$ &  $x^2+3x+10$ & $8$ & $19$ &   $540^\dagger$ \\    \hline
		$19$  & $10$ &  $x^2+6x+18$ & $0$ & $13$ &   $856^\dagger$ \\    \hline
		$19$  & $10$ &  $x^2+6x+18$ & $2$ & $17$ &   $1008^\dagger$ \\    \hline
		$19$  & $20$ &  $x^2+6x+18$ & $8$ & $35$ &   $1692^\dagger$ \\    \hline
		$19$  & $10$ &  $x^4+x^2+7$ & $9$ & $36$ &   $1730^\dagger$ \\    \hline
	\end{tabular}
\end{table}
\begin{table}[h!]
	\centering
	\captionof*{table}{\bf{New entry}}
	\begin{tabular}{|c|c|c|c|c|c|c|}\hline
	$q$ & $m$ & $f$ & $s$ & $g$ & $\# \cX(\fqs)$ & $\text{OLB}$\\ \hline
		$7^2$ &  $10$ & $x^2 + \xi_{7^2}x + \xi_{7^2}^{39}$ &  $3$  & $18$   & $3726$ &     $3649$ \\ \hline 
		$7^2$ & $10$ & $x^4+\xi_{7^2}x^2+\xi_{7^2}^{29}$ & $4$ & $25$ & $4272$ & $4134$ \\ \hline
		$7^2$ & $10$ & $x^4+2x^2+3$ & $4$ & $35$ & $5052$ & $4827$ \\ \hline
	$17$ &   $6$ & $x^4 + 6x^2 + 16$ &  $1$  & $20$   & $826$  &    $770$    \\ \hline
	$17$ &  $18$ & $x^3 + 14x + 2$ & $3$  & $23$  &  $892$ &     $842$ \\ \hline
	$19$ &  $10$ & $x^4 + 2x^2 + 16$ &  $4$  & $25$   & $1072$ &     $1033$ \\ \hline 
		\end{tabular}
\end{table}
\end{example}

\vspace{5cm}
Inspired by the previous constructions, we present some improvements obtained using Artin-Schreier extensions.

\begin{example}\label{Example10}
Let $\cX$ be the curve defined by the equation
$$
\cX:\quad y^q+y=\frac{f(x)f^*(x)}{x^s}
$$
where $f\in \fq[x]$ and $s\geq 0$ is an integer. We have the following improvements in manYPoints table in \cite{MP2009}.
\begin{table}[h!]
\centering
\captionof*{table}{\bf{New entry}}
\begin{tabular}{|c|c|c|c|c|c|}\hline
$q$ & $f(x)$ & $s$ & $g$ & $\# \cX(\fqs)$ & $\text{OLB}$ \\ \hline 
$ 7 $ & $ x^2 + 1$ & $2$ & $ 12 $ & $ 170 $  &  $ 165 $\\ \hline  
$ 11 $ & $ x^2 + 1$ & $2$ & $ 20 $ & $ 442 $  &  $ 430$\\ \hline 
$ 13 $ & $ x^2 + 1$ & $2$ & $ 24 $ & $ 626 $  &  $ 611 $\\ \hline 
\end{tabular}
\end{table}
\end{example}

\section{Curves over $\fqs$ from Section \ref{construction}: the case of $\epsilon=1$ and $\lambda=-1$}
\label{Section 5}
In this section, we consider the curve $\cX$ in (\ref{curveX}) with $\epsilon=1$ and $\lambda=-1$. As in Section \ref{Section 4}, we provide a lower bound for the number of $\fqs$-rational points on the curve $\cX$ when the polynomial $f \in \fq[x]$ satisfies certain conditions. We also provide some examples of curves with many points.

\begin{theorem}\label{theoX2}
Let $m \geq 2$ be a divisor of $q-1$, $f \in \fq[x]$ be a separable polynomial of degree $d$ satisfying $f(0)\neq 0$ and $(f, f^*)=1$, and $s$ be an integer $0\leq s <m$. Then the algebraic curve defined by the affine equation
\begin{equation}\label{curveX2}
\cX: \quad y^{m}=\frac{x^sf(x)}{f^*(x)}.
\end{equation}
has genus 
$$g=d(m-1)+1-(m,s).$$
Further if $(f, x^{q+1}-1)=1$, then the number of rational points $\#\cX(\fqs)$ over $\fqs$ satisfies
$$\#\cX(\fqs) \geq  2 N_f(\fqs) +m(q+1).$$
\end{theorem}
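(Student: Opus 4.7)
The genus is an immediate application of Proposition~\ref{prop1}. Specializing to $\epsilon=1$, $\lambda=-1$, and $d_1=\deg(f,f^*)=0$ gives $(m,\epsilon s+d+d\lambda)=(m,s)$ and kills the two terms involving $d_1$, leaving
\[g=(m-1)d+1-\frac{(m,s)+(m,s)}{2}=d(m-1)+1-(m,s).\]

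For the point count, the plan is to exhibit two disjoint families of rational places and add their sizes. The first family consists of the places lying over roots of $f$ or of $f^*$ in $\fqs$. By Remark~\ref{remark1}, each such root appears with multiplicity one in the factorization of $h(x):=x^{s}f(x)/f^*(x)$ (using $(f,f^*)=1$ and $f(0)\neq 0$), so the corresponding place $P_\alpha$ is totally ramified and the unique place above it is automatically rational. Since $\alpha\mapsto\alpha^{-1}$ is a bijection between the roots of $f$ and those of $f^*$ in $\fqs^*$, this family contributes exactly $2N_f(\fqs)$ rational places, pairwise distinct because $(f,f^*)=1$.

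The second family consists of the places lying over $P_\alpha$ for $\alpha\in\mu_{q+1}:=\{\beta\in\fqs^*:\beta^{q+1}=1\}$; this is where the hypothesis $(f,x^{q+1}-1)=1$ enters. Since $\alpha^{q}=\alpha^{-1}$ and $f\in\fq[x]$,
\[f^*(\alpha)=\alpha^{d}f(\alpha^{-1})=\alpha^{d}f(\alpha^{q})=\alpha^{d}f(\alpha)^{q},\]
so $h(\alpha)=\alpha^{s-d}f(\alpha)^{1-q}$, which sits inside $\mu_{q+1}$ because $(f(\alpha)^{1-q})^{q+1}=f(\alpha)^{1-q^{2}}=1$. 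Now $m\mid q-1$ implies $q+1\mid (q^{2}-1)/m$, i.e.\ $\mu_{q+1}\subseteq(\fqs^*)^{m}$, so $h(\alpha)$ is automatically an $m$-th power in $\fqs^*$. Remark~\ref{remark1} (with $k_\alpha=0$) then produces $m$ rational places over $P_\alpha$; combined with the fact that $(f,x^{q+1}-1)=1$ forces $f(\alpha)\neq 0$ and $f^*(\alpha)\neq 0$ throughout $\mu_{q+1}$, this yields $m(q+1)$ rational places, disjoint from the first family.

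Adding the two contributions gives the stated bound $\#\cX(\fqs)\geq 2N_f(\fqs)+m(q+1)$. The main obstacle is the third paragraph: one has to spot the collapse $h(\alpha)=\alpha^{s-d}f(\alpha)^{1-q}\in\mu_{q+1}$ on $\mu_{q+1}$, and then notice that the divisibility $m\mid q-1$ is precisely what forces $\mu_{q+1}\subseteq(\fqs^*)^{m}$; the remainder is careful bookkeeping with Remark~\ref{remark1}.
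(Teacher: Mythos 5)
Your proof is correct and follows essentially the same route as the paper: the genus from Proposition~\ref{prop1}, one rational place over each root of $ff^*$ in $\fqs$, and $m$ rational places over each $\alpha$ with $\alpha^{q+1}=1$ via the collapse $f^*(\alpha)=\alpha^{d}f(\alpha)^{q}$. The only cosmetic difference is that the paper packages the $m$-th power condition into the polynomial identity $h_1(\beta)=(\beta^sf(\beta))^{q+1}-f^*(\beta)^{q+1}=0$ on $\mu_{q+1}$, whereas you phrase it as the subgroup inclusion $\mu_{q+1}\subseteq(\fqs^*)^{m}$ coming from $m\mid q-1$; these are the same observation.
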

\begin{proof}
A direct application of Proposition \ref{prop1} gives the genus of the curve defined in (\ref{curveX2}). To obtain an expression for the number of rational points for this curve, we observe that for $\a \in \fqs^*$ with $f(\a)f^*(\a)\neq 0$, we have $\frac{\a^sf(\a)}{f^*(\a)}$ is a $m$-th power in $\fqs$ if and only if
$
\left(\frac{\a^sf(\a)}{f^*(\a)}\right)^{\frac{q^2-1}{m}}=1,$
which is equivalent to
$$((\a^sf(\a))^{q+1}-f^*(\a)^{q+1})\sum_{i=0}^{\frac{q-1}{m}-1}(\a^sf(\a))^{(q+1)i}f^*(\a)^{(q+1)\left(\frac{q-1}{m}-1-i\right)}=0.$$
Let
$$
h_1(x)=(x^sf(x))^{q+1}-f^*(x)^{q+1} \quad \text{ and} \quad
h_2(x)=\sum_{i=0}^{\frac{q-1}{m}-1}(x^sf(x))^{(q+1)i}f^*(x)^{(q+1)\left(\frac{q-1}{m}-1-i\right)}.
$$
Then $h_1$ and $h_2$ are co prime. In fact, if $\a$  a root of $h_1$ we have $(\a^sf(\a))^{q+1}=f^*(\a)^{(q+1)}$ and
\begin{align*}
h_2(\a)&=\sum_{i=0}^{\frac{q-1}{m}-1}(\a^sf(\a))^{(q+1)i}f^*(\a)^{(q+1)\left(\frac{q-1}{m}-1-i\right)}
=\sum_{i=0}^{\frac{q-1}{m}-1}f^*(\a)^{(q+1)i}f^*(\a)^{(q+1)\left(\frac{q-1}{m}-1-i\right)}\\
&=\sum_{i=0}^{\frac{q-1}{m}-1}f^*(\a)^{(q+1)\left(\frac{q-1}{m}-1\right)}
=\left(\frac{q-1}{m}\right)f^*(\a)^{(q+1)\left(\frac{q-1}{m}-1\right)}\neq 0.
\end{align*}
Also, since $(h_1, ff^*)=(h_2, ff^*)=1$, we obtain
$$\#\{ \a \in \fqs \mid f(\a)f^*(\a)\neq 0 \text{ and }\frac{\a^sf(\a)}{f^*(\a)} \text{ is a } m\text{-th power in} \fqs^*\}=N_{h_1}(\fqs)+N_{h_2}(\fqs).$$
On the other hand, from Remark \ref{remark1}, we know that each root in $\fqs$ of the polynomial $ff^*$ gives one rational point on the curve.
Thus 
\begin{equation}\label{NRPX2}
\# \cX (\fqs)\geq 2N_f(\fqs)+m(N_{h_1}(\fqs)+N_{h_2}(\fqs)).
\end{equation}
Next we assume $(f, x^{q+1}-1)=1$. Then for $\beta\in \fqs$ such that $\beta^{q+1}=1$, we have 
\begin{align*}
h_1(\beta)&=(\beta^sf(\beta))^{q+1}-f^*(\beta)^{q+1}\\
&=\beta^{s(q+1)}f(\beta)^{q+1}-\beta^{d(q+1)}f(\beta)^{q+1}\\
&=0.
\end{align*}
Therefore $N_{h_1}(\fqs)\geq q+1$. Hence the assertion follows from \eqref{NRPX2}.
\end{proof}

From the constructions given in Theorem \ref{theoX2}, we obtain the following examples of curves with many points.

\begin{example}\label{Example3}
Let $f(x)=x+b \in \fq[x]$ such that $b\neq 0, b^2\neq 1$ and $m, s, \cX$ be as defined in Theorem \ref{theoX2}. Then $ (f, f^*)=(f, x^{q+1}-1)=1$ and the curve $\cX$ has genus $g=m-(m,s)$. We obtain the following tables of curves with many points.\\

\begin{minipage}[c]{0.5\textwidth}
\centering
\captionof*{table}{\bf{New entry}}
\begin{tabular}
{
|>{\centering\arraybackslash}p{0.5cm}|>{\centering\arraybackslash}p{0.5cm} |>{\centering\arraybackslash}p{0.5cm} |>{\centering\arraybackslash}p{0.5cm} |>{\centering\arraybackslash}p{0.5cm} |>{\centering\arraybackslash}p{1.5cm} |
>{\centering\arraybackslash}p{1.1cm} |
}
\hline
$q$ & $m$ & $b$ & $s$ & $g$ & $\# \cX(\fqs)$ & $\text{OLB}$\\ \hline 
$ 5^2 $ & $ 24 $ & $ \xi_{5^2}^3 $ & $ 8 $ & $ 16 $ & $ 1202 $ & $ 1191 $ \\ \hline
$ 5^2 $ & $ 24 $ & $ 2 $ & $ 4 $ & $ 20 $ & $ 1450 $ & $ 1333 $ \\ \hline
$ 5^2 $ & $ 24 $ & $ 2 $ & $ 9 $ & $ 21 $ & $ 1400 $ & $ 1368 $ \\ \hline
$ 7^2 $ & $ 16 $ & $ \xi_{7^2}^3 $ & $6$ & $ 14 $ & $ 3558 $ & $ 3372 $\\ \hline
$ 7^2 $ & $ 16 $ & $ \xi_{7^2}^5 $ & $7$ & $ 15 $ & $ 3684 $ & $ 3441 $\\ \hline
$ 7^2 $ & $ 24 $ & $ \xi_{7^2}^{13} $ & $5$ & $ 23 $ & $ 4276 $ & $ 3995 $\\ \hline
$ 11^2 $ & $ 15 $ & $ \xi_{11^2}^2 $ & $ 4 $ & $ 14 $ & $ 17674 $ & $ 17037 $\\ \hline
$ 11^2 $ & $ 24 $ & $ \xi_{11^2}^{25} $ & $ 8 $ & $ 16 $ & $ 18050 $ & $ 17379 $\\ \hline
$ 11^2 $ & $ 24 $ & $ 5 $ & $ 3 $ & $ 21 $ & $ 18968 $ & $ 18235 $\\ \hline
$ 11^2 $ & $ 24 $ & $ \xi_{11^2}^{21} $ & $ 7 $ & $ 23 $ & $ 19204 $ & $ 18577 $\\ \hline
$ 11^2 $ & $ 30 $ & $ \xi_{11^2}^{9} $ & $ 3 $ & $ 27 $ & $ 19988 $ & $ 19262 $\\ \hline
$ 11^2 $ & $ 30 $ & $ \xi_{11^2}^{2} $ & $ 4 $ & $ 28 $ & $ 20106 $ & $ 19433 $\\ \hline
$ 11^2 $ & $ 40 $ & $ \xi_{11^2}^{7} $ & $ 4 $ & $ 36 $ & $ 20962 $ & $ 20802 $\\ \hline
$ 11^2 $ & $ 40 $ & $ \xi_{11^2}^{13} $ & $ 6 $ & $ 38 $ & $ 22246 $ & $ 21144 $\\ \hline
$ 13^2 $ & $ 12 $ & $ \xi_{13^2}^{23} $ & $5$ & $ 11 $ & $ 31972 $ & $ 31191 $\\ \hline
$ 13^2 $ & $ 14 $ & $ \xi_{13^2}^{10} $ & $9$ & $ 13 $ & $ 32260 $ & $ 31669 $\\ \hline
$ 13^2 $ & $ 21 $ & $ \xi_{13^2}^2 $ & $5$ & $ 20 $ & $ 34318 $ & $ 33342 $\\ \hline
$ 13^2 $ & $ 24 $ & $ \xi_{13^2}^{23} $ & $5$ & $ 23 $ & $ 35428 $ & $ 34059 $\\ \hline
$ 13^2$ & $ 28 $ & $ \xi_{13^2}^5 $ & $9$ & $ 27 $ & $ 35452 $ & $ 35015 $\\ \hline
$ 13^2 $ & $ 42 $ & $ \xi_{13^2}^{11}$ & $7$ & $ 35 $ & $ 37550 $ & $ 36927 $\\ \hline

\end{tabular}
\end{minipage}
\hfill
\begin{minipage}[c]{0.5\textwidth}
\centering
\captionof*{table}{\bf{New entry}}
\begin{tabular}
{
|>{\centering\arraybackslash}p{0.5cm}|>{\centering\arraybackslash}p{0.5cm} |>{\centering\arraybackslash}p{0.5cm} |>{\centering\arraybackslash}p{0.5cm} |>{\centering\arraybackslash}p{0.5cm} |>{\centering\arraybackslash}p{1.5cm} |
>{\centering\arraybackslash}p{1.1cm} |
}
\hline
$q$ & $m$ & $b$ & $s$ & $g$ & $\# \cX(\fqs)$ & $\text{OLB}$\\ \hline 
$ 17^2 $ & $ 12 $ & $ \xi_{17^2}^4 $ & $9$ & $ 9 $ & $ 87938 $ & $ 87200 $\\ \hline
$ 17^2 $ & $ 12 $ & $ \xi_{17^2}^6 $ & $5$ & $ 11 $ & $ 88828 $ & $ 88017 $\\ \hline
$ 17^2 $ & $ 16 $ & $ \xi_{17^2}^7 $ & $5$ & $ 15 $ & $ 91044 $ & $ 89652 $\\ \hline
$ 17^2 $ & $ 24 $ & $\xi_{17^2}^4 $ & $5$ & $ 23 $ & $ 94996 $ & $ 92922 $\\ \hline
$ 17^2 $ & $ 32 $ & $ \xi_{17^2}^7 $ & $5$ & $ 31 $ & $ 97604 $ & $ 96191 $\\ \hline
$ 17^2 $ & $ 48 $ & $\xi_{17^2}^4 $ & $5$ & $ 47 $ & $ 105124 $ & $ 102731 $\\ \hline
\end{tabular}
\vspace{0.3cm}
\centering
\captionof*{table}{\bf{New record}}
\begin{tabular}
{
|>{\centering\arraybackslash}p{0.5cm}|>{\centering\arraybackslash}p{0.5cm} |>{\centering\arraybackslash}p{0.5cm} |>{\centering\arraybackslash}p{0.5cm} |>{\centering\arraybackslash}p{0.5cm} |>{\centering\arraybackslash}p{1.5cm} |
>{\centering\arraybackslash}p{1.1cm} |
}
\hline
$q$ & $m$ & $b$ & $s$ & $g$ & $\# \cX(\fqs)$ & $\text{OLB}$\\ \hline 
$ 17 $ & $ 16 $ & $ 3 $ & $9$ & $ 15 $ & $ 708 $ & $ 692 $\\ \hline
$ 11^2 $ & $ 15 $ & $ \xi_{11^2}^{26} $ & $ 5 $ & $ 10 $ & $ 16952 $ & $ 16942 $\\ \hline
$ 19 $ & $ 18 $ & $ 2 $ & $3$ & $ 15 $ & $ 866 $ & $ 782 $\\ \hline
\end{tabular}
\vspace{0.3cm}
\centering
\captionof*{table}{\bf{Meet record}}
\begin{tabular}
{
|>{\centering\arraybackslash}p{0.7cm}|>{\centering\arraybackslash}p{0.7cm} |>{\centering\arraybackslash}p{0.7cm} |>{\centering\arraybackslash}p{0.7cm} |>{\centering\arraybackslash}p{0.7cm} |>{\centering\arraybackslash}p{1.9cm} |
}
\hline
$q$ & $m$ & $b$ & $s$ & $g$ & $\# \cX(\fqs)$\\ \hline 
$ 7 $ & $ 6 $ & $ 2 $ & $4$ & $ 4 $ & $ 102 $\\ \hline
$ 11^2 $ & $ 8 $ & $ \xi_{11^2}^{30} $ & $4$ & $ 4 $ & $ 15610^\dagger $\\ \hline
$ 11^2 $ & $ 8 $ & $ \xi_{11^2}^{37} $ & $3$ & $ 7 $ & $ 16308 $\\ \hline
$ 13 $ & $ 12 $ & $ 2 $ & $4$ & $ 8 $ & $ 362 $\\ \hline
\end{tabular}
\end{minipage}
\end{example}

\begin{example}\label{Example4}
Let $f(x)=x^2+b \in \fq[x]$ such that $b\neq 0, b^2\neq 1$, and $m, s, \cX$ be as defined in Theorem \ref{theoX2}. Then $(f, f^*)=1$ and the curve $\cX$ has genus $g=2m-1-(m,s)$. We have the following tables.\\

\begin{minipage}[c]{0.5\textwidth}
\centering
\captionof*{table}{\bf{New entry}}
\begin{tabular}
{
|>{\centering\arraybackslash}p{0.5cm}|>{\centering\arraybackslash}p{0.5cm} |>{\centering\arraybackslash}p{0.5cm} |>{\centering\arraybackslash}p{0.5cm} |>{\centering\arraybackslash}p{0.5cm} |>{\centering\arraybackslash}p{1.5cm} |
>{\centering\arraybackslash}p{1.1cm} |
}
\hline
$q$ & $m$ & $b$ & $s$ & $g$ & $\# \cX(\fqs)$ & $\text{OLB}$\\ \hline 
$ 5^2 $ & $ 8 $ & $\xi_{5^2}$ & $ 2 $ & $ 13 $ & $ 1128 $ & $ 1085 $\\ \hline 
$ 5^2 $ & $ 24 $ & $ \xi_{5^2} $ & $ 10 $ & $ 45 $ & $ 2216 $ & $ 2216 $\\ \hline  
$ 7^2 $ & $ 12 $ & $\xi_{7^2}^6$ & $ 2 $ & $ 21 $ & $ 4040 $ & $ 3857 $\\ \hline  
$ 7^2 $ & $ 16 $ & $ \xi_{7^2}^{13}$ & $ 2 $ & $ 29 $ & $ 4552 $ & $ 4411 $\\ \hline 
$ 7^2 $ & $ 24 $ & $ \xi_{7^2}^5$ & $ 6 $ & $ 41 $ & $ 5380 $ & $ 5243 $\\ \hline 
$ 7^2 $ & $ 24 $ & $ \xi_{7^2}^3$ & $ 4 $ & $ 43 $ & $ 5476 $ & $ 5381 $\\ \hline 
$ 7^2 $ & $ 24 $ & $ \xi_{7^2}^{10}$ & $ 10 $ & $ 45 $ & $ 5672 $ & $ 5520 $\\ \hline  
$ 11 $ & $ 10 $ & $ 3$ & $ 2 $ & $ 17 $ & $ 408 $ & $ 386 $\\ \hline
$11^2$ & $8$ & $\xi_{11^2}^{18}$ & $4$ & $11$ & $16940$ & $16524$ \\ \hline
$ 11^2 $ & $ 8 $ & $ \xi_{11^2}^7$ & $ 2 $ & $ 13 $ & $ 17480 $ & $ 16866 $\\ \hline 
$ 11^2 $ & $ 10 $ & $ \xi_{11^2}^{19}$ & $ 2 $ & $ 17 $ & $ 18128 $ & $ 17551 $\\ \hline 
$11^2$ & $12$ & $\xi_{11^2}^{43}$ & $4$ & $19$ & $18436$ & $17893$ \\ \hline
$ 11^2 $ & $ 15 $ & $ \xi_{11^2}^{2} $ & $ 5 $ & $ 24 $ & $ 18964 $ & $ 18748 $\\ \hline
$ 11^2 $ & $ 15 $ & $ \xi_{11^2}^{31}$ & $ 3 $ & $ 26 $ & $ 19564 $ & $ 19091 $\\ \hline 
$ 11^2 $ & $ 30 $ & $\xi_{11^2}$ & $ 0 $ & $ 29 $ & $ 19684 $ & $ 19604 $\\ \hline 
$ 11^2 $ & $ 20 $ & $\xi_{11^2}^{26}$ & $ 5 $ & $ 34 $ & $ 20644 $ & $ 20460 $\\ \hline 
$ 11^2 $ & $ 20 $ & $ \xi_{11^2}^{37} $ & $ 8 $ & $ 35 $ & $ 20964 $ & $ 20631 $\\ \hline
$ 11^2 $ & $ 20 $ & $\xi_{11^2}^{13}$ & $ 2 $ & $ 37 $ & $ 21528 $ & $ 20973 $\\ \hline 
$11^2$ & $24$ & $\xi_{11^2}^{25}$ & $4$ & $43$ & $22372$ & $22000$ \\ \hline
$ 11^2 $ & $ 24 $ & $\xi_{11^2}^{21}$ & $ 2 $ & $ 45 $ & $ 22472 $ & $ 22342 $\\ \hline 
\end{tabular}
\end{minipage}
\hfill
\begin{minipage}[c]{0.5\textwidth}
\centering
\captionof*{table}{\bf{New entry}}
\begin{tabular}
{
|>{\centering\arraybackslash}p{0.5cm}|>{\centering\arraybackslash}p{0.5cm} |>{\centering\arraybackslash}p{0.5cm} |>{\centering\arraybackslash}p{0.5cm} |>{\centering\arraybackslash}p{0.5cm} |>{\centering\arraybackslash}p{1.5cm} |
>{\centering\arraybackslash}p{1.1cm} |
}
\hline
$q$ & $m$ & $b$ & $s$ & $g$ & $\# \cX(\fqs)$ & $\text{OLB}$\\ \hline 
$ 13 $ & $ 12 $ & $2$ & $ 8 $ & $ 19 $ & $ 532 $ & $ 519 $\\ \hline 
$13^2$ & $12$ & $\xi_{13^2}^{5}$ & $4$ & $19$ & $33748$ & $33103$ \\ \hline
$ 13^2 $ & $ 12 $ & $\xi_{13^2}^{44}$ & $ 1 $ & $ 22 $ & $ 34374 $ & $ 33820 $\\ \hline
$13^2$ & $14$ & $\xi_{13^2}^{8}$ & $2$ & $25$ & $35400$ & $34537$\\ \hline
$ 13^2 $ & $ 21 $ & $ 8$ & $ 3 $ & $ 38 $ & $ 38314 $ & $ 37644 $\\ \hline
$ 13^2 $ & $ 24 $ & $\xi_{13^2}^{23}$ & $ 10 $ & $ 45 $ & $ 40136 $ & $ 39317 $\\ \hline
$ 17 $ & $ 16 $ & $ 2$ & $ 4 $ & $ 27 $ & $ 972 $ & $ 939 $\\ \hline 
$ 17^2 $ & $ 8 $ & $\xi_{17^2}$ & $ 6 $ & $ 13 $ & $ 89224 $ & $ 88835 $\\ \hline
$ 17^2 $ & $ 18 $ & $\xi_{17^2}$ & $ 7 $ & $ 34 $ & $ 97494 $ & $ 97418 $\\ \hline
$ 19 $ & $ 18 $ & $ 2$ & $ 6 $ & $ 29 $ & $ 1156 $ & $ 1141 $\\ \hline 
$ 19^2 $ & $ 6 $ & $\xi_{19^2}$ & $ 1 $ & $ 10 $ & $ 136782 $ & $ 135427 $\\ \hline
$ 19^2 $ & $ 12 $ & $\xi_{19^2}$ & $ 0 $ & $ 11 $ & $ 136612 $ & $ 135937 $\\ \hline
$19^2$ & $9$ & $\xi_{19^2}$ & $3$ & $14$ & $138208$ & $137469$\\ \hline
$19^2$ & $9$ & $\xi_{19^2}$ & $2$ & $16$ & $139650$ & $138490$\\ \hline
$ 19^2 $ & $ 24 $ & $\xi_{19^2}$ & $ 0 $ & $ 23 $ & $ 142372 $ & $ 142064 $\\ \hline
\end{tabular}
\vspace{0.7cm}
\centering
\captionof*{table}{\bf{New record}}
\begin{tabular}
{
|>{\centering\arraybackslash}p{0.5cm}|>{\centering\arraybackslash}p{0.5cm} |>{\centering\arraybackslash}p{0.5cm} |>{\centering\arraybackslash}p{0.5cm} |>{\centering\arraybackslash}p{0.5cm} |>{\centering\arraybackslash}p{1.5cm} |
>{\centering\arraybackslash}p{1.1cm} |
}
\hline
$q$ & $m$ & $b$ & $s$ & $g$ & $\# \cX(\fqs)$ & $\text{OLB}$\\ \hline 
$ 17 $ & $ 8 $ & $4$ & $ 2 $ & $ 13 $ & $ 648 $ & $ 612 $\\ \hline 
\end{tabular}
\end{minipage}
\end{example}

\begin{example}\label{Example5}
Let $f(x)=x^3+b \in \fq[x]$ such that $b\neq 0, b^2\neq 1$, and $m, s, \cX$ be as defined in Theorem \ref{theoX2}. Then $(f, f^*)=1$ and the curve $\cX$ has genus $g=3m-2-(m,s)$. We have the following tables.\\
\begin{minipage}[c]{0.5\textwidth}
\centering
\captionof*{table}{\bf{New entry}}
\begin{tabular}
{
|>{\centering\arraybackslash}p{0.5cm}|>{\centering\arraybackslash}p{0.5cm} |>{\centering\arraybackslash}p{0.5cm} |>{\centering\arraybackslash}p{0.5cm} |>{\centering\arraybackslash}p{0.5cm} |>{\centering\arraybackslash}p{1.5cm} |
>{\centering\arraybackslash}p{1.1cm} |
}
\hline
$q$ & $m$ & $b$ & $s$ & $g$ & $\# \cX(\fqs)$ & $\text{OLB}$\\ \hline 
$ 7^2 $ & $ 12 $ & $\xi_{7^2}^2 $ & $ 3 $ & $ 31 $ & $ 4680 $ & $ 4550 $\\ \hline
$ 11^2 $ & $ 8 $ & $\xi_{11^2}^{15} $ & $ 4 $ & $ 18 $ & $ 18486 $ & $ 17722 $\\ \hline
$ 11^2 $ & $ 12 $ & $ \xi_{11^2}^2 $ & $ 0 $ & $ 22 $ & $ 19080 $ & $ 18406 $\\ \hline
$ 11^2 $ & $ 12 $ & $ \xi_{11^2}^9 $ & $ 3 $ & $ 31 $ & $ 20820 $ & $ 19946 $\\ \hline
$ 11^2 $ & $ 15 $ & $ \xi_{11^2}^{30} $ & $ 6 $ & $ 40 $ & $ 22242 $ & $ 21486 $\\ \hline
$ 11^2 $ & $ 24 $ & $ \xi_{11^2}^2 $ & $ 0 $ & $ 46 $ & $ 22608 $ & $ 22513 $\\ \hline
$ 13^2 $ & $ 7 $ & $ \xi_{13^2}^4 $ & $ 3 $ & $ 18 $ & $ 33980 $ & $ 32864 $\\ \hline
$ 13^2 $ & $ 12 $ & $ \xi_{13^2}^{23} $ & $ 3 $ & $ 31 $ & $ 36792 $ & $ 35971 $\\ \hline
$ 13^2 $ & $ 14 $ & $ \xi_{13^2}^{21} $ & $ 7 $ & $ 33 $ & $ 37568 $ & $ 36449 $\\ \hline
$ 13^2 $ & $ 14 $ & $ \xi_{13^2}^3 $ & $ 3 $ & $ 39 $ & $ 38732 $ & $ 37883 $\\ \hline
$ 19 $ & $ 9 $ & $ 4 $ & $ 6 $ & $ 22 $ & $ 972 $ & $ 953 $ \\ \hline
\end{tabular}
\end{minipage}
\hfill
\begin{minipage}[c]{0.5\textwidth}
\centering
\captionof*{table}{\bf{New record}}
\begin{tabular}
{
|>{\centering\arraybackslash}p{0.5cm}|>{\centering\arraybackslash}p{0.5cm} |>{\centering\arraybackslash}p{0.5cm} |>{\centering\arraybackslash}p{0.5cm} |>{\centering\arraybackslash}p{0.5cm} |>{\centering\arraybackslash}p{1.5cm} |
>{\centering\arraybackslash}p{1.1cm} |
}
\hline
$q$ & $m$ & $b$ & $s$ & $g$ & $\# \cX(\fqs)$ & $\text{OLB}$\\ \hline 
$ 11^2 $ & $ 5 $ & $\xi_{11^2}^6 $ & $ 0 $ & $ 8 $ & $ 16566 $ & $ 16546 $\\ \hline 
\end{tabular}
\end{minipage}
\end{example}

\begin{example}\label{Example6}
Let $f(x)=x^4+b \in \fq[x]$ such that $b\neq 0, b^2\neq 1$, and $m, s, \cX$ be as defined in Theorem \ref{theoX2}. Then $(f, f^*)=1$ and the curve $\cX$ has genus $g=4m-3-(m,s)$. We have the following tables of curves with many points.\\
\begin{table}[h!]
\centering
\captionof*{table}{\bf{New entry}}
\begin{tabular}
{
|>{\centering\arraybackslash}p{0.5cm}|>{\centering\arraybackslash}p{0.5cm} |>{\centering\arraybackslash}p{0.5cm} |>{\centering\arraybackslash}p{0.5cm} |>{\centering\arraybackslash}p{0.5cm} |>{\centering\arraybackslash}p{1.5cm} |
>{\centering\arraybackslash}p{1.1cm} |
}
\hline
$q$ & $m$ & $b$ & $s$ & $g$ & $\# \cX(\fqs)$ & $\text{OLB}$\\ \hline 
$ 7^2 $ & $ 8 $ & $ 3 $ & $ 5 $ & $ 28 $ & $ 4522 $ & $ 4342 $\\ \hline
$ 11^2 $ & $ 6 $ & $ \xi_{11^2}^{20} $ & $ 0 $ & $ 15 $ & $ 17672 $ & $ 17208 $\\ \hline
$ 11^2 $ & $ 8 $ & $ \xi_{11^2}^7 $ & $ 4 $ & $ 25 $ & $ 19456 $ & $ 18919 $\\ \hline
$ 11^2 $ & $ 12 $ & $ \xi_{11^2}^{14} $ & $ 6 $ & $ 39 $ & $ 22184 $ & $ 21315 $\\ \hline
$ 13 $ & $ 6 $ & $ 2 $ & $ 1 $ & $ 20 $ & $ 554 $ & $ 537 $\\ \hline
$ 13^2 $ & $ 6 $ & $ \xi_{13^2}^{22} $ & $ 0 $ & $ 15 $ & $ 32216 $ & $ 32147 $\\ \hline 
$ 13^2 $ & $ 8 $ & $ \xi_{13^2}^4 $ & $ 0 $ & $ 21 $ & $ 34584 $ & $ 33581 $\\ \hline 
$ 13^2 $ & $ 12 $ & $ \xi_{13^2}^{23} $ & $ 8 $ & $ 41 $ & $ 38784 $ & $ 38361 $\\ \hline
$ 17^2 $ & $ 6 $ & $ \xi_{17^2} $ & $ 5 $ & $ 20 $ & $ 91826 $ & $ 91696 $\\ \hline
\end{tabular}
\end{table}
\end{example}
We finish this section by giving some additional improvements in manYPoints table in \cite{MP2009}.
\begin{example}\label{Example7}
Let $f\in \fq[x]$ and $m, s, \cX$ be as defined in Theorem \ref{theoX2}. We have the following curves with many points. 

\begin{table}[h!]
\centering
\captionof*{table}{\bf{New entry}}
\begin{tabular}{|c|c|c|c|c|c|c|}\hline
$q$ & $m$ & $f$ & $s$ & $g$ & $\# \cX(\fqs)$ & $\text{OLB}$\\ \hline
$ 5^2 $ & $ 4 $ & $ x^4+\xi_{5^2}x^2+\xi_{5^2}^{7} $ & $ 4 $ & $ 9 $ & $ 984 $ & $ 944 $\\ \hline 
$ 5^2 $ & $ 8 $ & $ x^2+2x+\xi_{5^2}^{7} $ & $ 4 $ & $ 11 $ & $ 1092 $ & $ 1014 $\\ \hline
$ 5^2 $ & $ 8 $ & $ x^2+\xi_{5^2}^3x+2 $ & $ 7 $ & $ 14 $ & $ 1206 $ & $ 1120 $\\ \hline
$ 5^2 $ & $ 6 $ & $ x^4+x^2+\xi_{5^2}^{14} $ & $ 6 $ & $ 15 $ & $ 1160 $ & $ 1156 $\\ \hline
$ 5^2 $ & $ 12 $ & $ x^2+\xi_{5^2}^2x+\xi_{5^2}^{8} $ & $ 6 $ & $ 17 $ & $ 1252 $ & $ 1227 $\\ \hline
$ 5^2 $ & $ 6 $ & $ x^4+x^2+\xi_{5^2}^{7} $ & $ 4 $ & $ 19 $ & $ 1308 $ & $ 1297 $\\ \hline
$ 7^2 $ & $ 6 $ & $ x^4+\xi_{7^2}^{44}x^2+5 $ & $ 4 $ & $ 19 $ & $ 3780 $ & $ 3718 $\\ \hline
$ 7^2 $ & $ 8 $ & $ x^6+\xi_{7^2}^6 $ & $ 3 $ & $ 42 $ & $ 5486 $ & $ 5312 $\\ \hline
$ 11^2 $ & $ 5 $ & $ x^8+\xi_{11^2}^{14} $ & $ 2 $ & $ 32 $ & $ 20482 $ & $ 20117 $\\ \hline
$ 11^2 $ & $ 5 $ & $ x^{12}+\xi_{11^2}^{4} $ & $ 0 $ & $ 44 $ & $ 22800 $ & $ 22171 $\\ \hline
$ 13^2 $ & $ 6 $ & $ x^7+2 $ & $ 0 $ & $ 30 $ & $ 35966 $ & $ 35732 $\\ \hline
\end{tabular}
\end{table}
\end{example}

\section{Curves with many points from fibre product of curves} \label{fibreproduct}
In this section, we construct new curves with many rational points by considering the fibre product of the curves constructed in Sections \ref{Section 4} and \ref{Section 5}. To provide a lower bound for the number of $\fqs$-rational points for these new constructions, we use a generalization of the Remark \ref{remark1} given in \cite[Theorem 4]{OT2014} for fibre products of Kummer extensions.

\begin{theorem}\label{theoX1X1}
	Let $i\in\{1, 2\}$. Let $m_i\geq 2$ be a divisor of $q+1$, $s_i$ be an integer with $0 \leq s_i < m_i$ and $f_i$ be a separable polynomial in $\fq [x]$ of degree $d_i$ satisfying $f_i(0)\neq 0$ and $(f_i, f_i^*)=(f_1f_1^*, f_2f_2^*)=1$. Then the curve $\cX$ defined by the affine equations
	\begin{equation}\label{curveX1X1}
		\cX: \left\{
		\begin{array}{l}
			y_2^{m_2}=\frac{f_2(x)f_2^*(x)}{x^{s_2}}\\
			y_1^{m_1}=\frac{f_1(x)f_1^*(x)}{x^{s_1}}
		\end{array} \right. 
	\end{equation}
	has genus 
	$$
	g=m_1m_2(d_1+d_2)-d_1m_2-d_2m_1+1-\frac{\kappa+(m_1m_2, m_2(2d_1-s_1), m_1(2d_2-s_2))}{2}
	$$ 
	where $\kappa=(m_1m_2, s_1m_2, s_2m_1)$. Further, if $\fqs$ is the full constant field of $\fqs(\cX)$, $[\fqs(\cX), \fqs(x)]=m_1m_2$ and $(f_i, x^{q+1}-1)=1$ for $i\in\{1, 2\}$, then the number of $\fqs$-rational points $\#\cX (\fqs)$ of the curve $\cX$ satisfies 
	\begin{align*}
		\#\cX (\fqs)&\geq m_1m_2((q+1, 2(d_1-s_1), 2(d_2-s_2))+q-3-2N_{f_1f_2}(\fq^*))\\
		&\hspace{5.7cm} +2m_2N_{f_1}(\fq^*)+2m_1N_{f_2}(\fq^*).
	\end{align*}
	In particular, for $s_1=d_1$ and $s_2=d_2$, we have
	$$
	\#\cX (\fqs)\geq 2m_1m_2(q-1-N_{f_1f_2}(\fq^*))+2m_2N_{f_1}(\fq^*)+2m_1N_{f_2}(\fq^*).
	$$
\end{theorem}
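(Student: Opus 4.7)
The plan is to compute the genus via Riemann-Hurwitz applied to the degree-$m_1m_2$ Galois cover $\cX\to\mathbb{P}^1$, and to bound $\#\cX(\fqs)$ from below by combining the contribution of $\fq^*$-roots of $f_if_i^*$ with a simultaneous Kummer-power count in the spirit of Theorem \ref{theoX1}. Set $h_i(x)=f_i(x)f_i^*(x)/x^{s_i}$. Under the assumptions that $\fqs$ is the full constant field of $\fqs(\cX)$ and $[\fqs(\cX):\fqs(x)]=m_1m_2$, the cover $\cX\to\mathbb{P}^1$ is a tame Galois extension with group $\mathbb{Z}/m_1\mathbb{Z}\times\mathbb{Z}/m_2\mathbb{Z}$. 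By \cite[Theorem 4]{OT2014}, which generalises Remark \ref{remark1} to fibre products of Kummer covers, each place above $P_\alpha$ has ramification index $e_\alpha={\rm lcm}(m_1/(m_1,v_\alpha(h_1)),\,m_2/(m_2,v_\alpha(h_2)))$, and there are $m_1m_2/e_\alpha$ such places. The coprimality hypotheses localise the ramification: each of the $2d_1$ roots of $f_1f_1^*$ is a simple zero of $h_1$ and a unit of $h_2$, so gives $e=m_1$ with $m_2$ places above; each of the $2d_2$ roots of $f_2f_2^*$ gives $e=m_2$ with $m_1$ places above; and the behaviour at $P_0$ and $P_\infty$ is governed by the valuation pairs $(-s_1,-s_2)$ and $(s_1-2d_1,\,s_2-2d_2)$.

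The next step is the arithmetic identity
$${\rm lcm}\!\left(\tfrac{m_1}{(m_1,a_1)},\,\tfrac{m_2}{(m_2,a_2)}\right)\;=\;\frac{m_1m_2}{(m_1m_2,\,m_2a_1,\,m_1a_2)},$$
which follows from ${\rm lcm}(A,B)=AB/(A,B)$ after observing $m_2(m_1,a_1)=(m_1m_2,m_2a_1)$ and $m_1(m_2,a_2)=(m_1m_2,m_1a_2)$. This identifies $m_1m_2/e_0=\kappa$ and $m_1m_2/e_\infty=(m_1m_2,\,m_2(2d_1-s_1),\,m_1(2d_2-s_2))$. The tame Riemann-Hurwitz formula
\begin{align*}
2g-2&=-2m_1m_2+2d_1m_2(m_1-1)+2d_2m_1(m_2-1)\\
&\quad+(m_1m_2-\kappa)+\bigl(m_1m_2-(m_1m_2,\,m_2(2d_1-s_1),\,m_1(2d_2-s_2))\bigr)
\end{align*}
then yields the stated genus after rearranging.

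For the point count, because $m_i\mid q+1$, every element of $\fq^*$ is an $m_i$-th power in $\fqs^*$. Hence each $\alpha\in\fq^*$ with $f_1(\alpha)=0$ forces $y_1=0$ and admits $m_2$ values of $y_2$, producing $m_2$ rational places; pairing roots of $f_1^*$ to those of $f_1$ via $\alpha\leftrightarrow 1/\alpha$ and symmetrising for $f_2,f_2^*$ yields $2m_2N_{f_1}(\fq^*)+2m_1N_{f_2}(\fq^*)$ rational places. For $\alpha\in\fqs^*$ with $f_1f_1^*f_2f_2^*(\alpha)\ne 0$ and both $h_i(\alpha)$ an $m_i$-th power in $\fqs^*$, the fibre supplies $m_1m_2$ rational places. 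Two families of such $\alpha$ are easy to exhibit: (i) $\alpha\in\fq^*$ with $f_1f_1^*f_2f_2^*(\alpha)\ne 0$, contributing $q-1-2N_{f_1f_2}(\fq^*)$ values (as $h_i(\alpha)\in\fq^*$ is automatically a power); and (ii) $\alpha$ with $\alpha^D=1$, where $D=(q+1,2(d_1-s_1),2(d_2-s_2))$, for which the reciprocal-polynomial computation of Theorem \ref{theoX1} gives $h_i(\alpha)^{q-1}=\alpha^{2(s_i-d_i)}=1$ because $D\mid 2(d_i-s_i)$. The hypothesis $(f_i, x^{q+1}-1)=1$---and its consequence $(f_i^*, x^{q+1}-1)=1$, which follows since $\beta\leftrightarrow\beta^q=1/\beta$ permutes the $(q+1)$-th roots of unity---ensures family (ii) avoids the zeros of $f_jf_j^*$. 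The overlap between the two families has at most $(q-1,2)\le 2$ elements, so inclusion-exclusion gives at least $D+q-3-2N_{f_1f_2}(\fq^*)$ suitable $\alpha$, each contributing $m_1m_2$ places. Adding this to the root contribution yields the stated bound; the specialisation $s_i=d_i$ follows from $D=q+1$ and $q+1+q-3=2(q-1)$.

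The main obstacle is the gcd/lcm bookkeeping that matches $m_1m_2/e_0$ exactly against the asymmetric form $\kappa=(m_1m_2,s_1m_2,s_2m_1)$ in the statement; the only other subtlety is propagating $(f_i, x^{q+1}-1)=1$ to $f_i^*$ via the reciprocal involution, so that the $D$-th-roots-of-unity family stays disjoint from the zero set of $f_1f_1^*f_2f_2^*$.
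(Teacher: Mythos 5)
Your argument is correct, and the point-counting half is essentially the paper's: you invoke \cite[Theorem 4]{OT2014} for the fibre structure, exhibit the two families of good $x$-coordinates (elements of $\fq^*$ off the zero locus of $f_1f_1^*f_2f_2^*$, and the $D$-th roots of unity with $D=(q+1,2(d_1-s_1),2(d_2-s_2))$), control their overlap by $(D,q-1)\mid 2$, and handle the zeros of $f_if_i^*$ separately — exactly as in the paper, which reduces the power conditions to the vanishing of the polynomials $h_{i,1}$ from Theorem \ref{theoX1}. Where you genuinely diverge is the genus: the paper climbs the tower $K(x)\subset K(x,y_1)\subset K(x,y_1,y_2)$, reusing the genus of $K(x,y_1)$ from Theorem \ref{theoX1} and computing the principal divisor of $x^{-s_2}f_2f_2^*$ in $K(x,y_1)$ to get the ramification of the top step, whereas you apply Riemann--Hurwitz once to the full degree-$m_1m_2$ cover using the $\mathrm{lcm}$ formula for ramification in fibre products together with the identity $\mathrm{lcm}\bigl(m_1/(m_1,a_1),m_2/(m_2,a_2)\bigr)=m_1m_2/(m_1m_2,m_2a_1,m_1a_2)$ (which checks out prime-by-prime). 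Your version is more symmetric and produces $\kappa$ and the $P_\infty$-term directly, at the cost of having to justify the $\mathrm{lcm}$ rule; the paper's version avoids that lemma but obscures the symmetry. One small caution: the paper asserts the genus unconditionally (given the hypotheses on the $f_i$) and therefore must, and does, \emph{prove} that the fibre product is absolutely irreducible of degree $m_1m_2$ over $K(x)$ — this follows from total ramification of index $m_2$ at the roots of $f_2$ in the second step of the tower. You instead import $[\fqs(\cX):\fqs(x)]=m_1m_2$ from the hypotheses that the theorem reserves for the point count; your ramification table at the roots of $f_2f_2^*$ already contains the needed fact, but you should state explicitly that it forces the degree to be $m_1m_2$ before running Riemann--Hurwitz.
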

\begin{proof}
	We start by computing the genus of the function field $K(x, y_1, y_2)$. By Theorem \ref{theoX1},  we have $g(K(x, y_1))=(2m_1d_1-2(d_1-1)-(m_1, s_1)-(m_1, 2d_1-s_1))/2$.  Also, for the roots $\gamma_1, \dots, \gamma_{d_1}$  of $f_1$ in $K$, we have the following ramification indices $e(P)$ in the extension $K(x,y_1)/K(x)$.
	$$ 
	e(P)= \left\{
	\begin{array}{ll}
		m_1/ (m_1, s_1), & \mbox{if }P \text{ is over } P_0,\\
		m_1,  & \mbox{if }P \text{ is over } P_{\gamma_i} \text{ or }P_{\gamma_i^{-1}},\\
		m_1/ (m_1, 2d_1-s_1),  & \mbox{if }P \text{ is over } P_{\infty},\\
		1,   & \mbox{otherwise}.
	\end{array} \right .
	$$ 
	Now we show that the extension $K(x,y_1, y_2)/K(x, y_1)$ is a Kummer extension. Let $\a_1, \dots, \a_{d_2} \in K$ be the roots of $f_2$. The principal divisors of the function $x^{-s_2}f_2(x)f_2^*(x)$ in $K(x)$ is given by  
	$$
	(x^{-s_2}f_2(x)f_2^*(x))_{K(x)}=\sum_{i=1}^{d_2}(P_{\alpha_i}+P_{\a_i^{-1}})-s_2P_0-(2d_2-s_2)P_{\infty},
	$$
	and consequently
	\begin{align*}
		(x^{-s_2}f_2(x)f_2^*(x))_{K(x, y_1)}&=\sum_{j=1}^{m_1}\sum_{i=1}^{d_2}(Q_{\alpha_i, j}+Q_{\a_i^{-1}, j})-\frac{s_2m_1}{(m_1, s_1)}\sum_{i=1}^{(m_1, s_1)}Q_{0, i}\\
		&\quad -\frac{m_1(2d_2-s_2)}{(m_1, 2d_1-s_1)}\sum_{i=1}^{(m_1, 2d_1-s_1)}Q_{\infty, i}
	\end{align*}
	where $Q_{\a_i, j}$, $Q_{\a_i^{-1}, j}$, $Q_{0, i}$ and $Q_{\infty, i}$ are the extensions in $K(x, y_1)$ of the places $P_{\a_i}$, $P_{\a_i^{-1}}$, $P_0$ and $P_{\infty}$ respectively. Thus the ramification indices in the extension $K(x, y_1, y_2)/K(x, y_1)$ are given by
	$$ 
	e(R)= \left\{
	\begin{array}{ll}
		m_2(m_1, s_1)/\kappa, \hspace{5mm} & \mbox{if }R \text{ is over } Q_{0, i},\\
		m_2(m_1, 2d_1-s_1)/(m_1m_2, m_1(2d_2-s_2), m_2(2d_1-s_1)), \hspace{5mm} & \mbox{if }R \text{ is over } Q_{\infty, i},\\
		m_2,  & \mbox{if }R \text{ is over } Q_{\alpha_i, j} \text{ or }Q_{\a_i^{-1}, j},\\
		1,  \hspace{25mm} & \mbox{otherwise}.
	\end{array} \right. 
	$$ 
	We conclude that the equations (\ref{curveX1X1}) define an absolutely irreducible curve. Its genus follows from the Riemann-Hurwitz formula applied to $K(x, y_1, y_2)/K(x, y_1)$.
	
	Next we provide a lower bound for the number of $\fqs$-rational points. From \cite[Theorem 4]{OT2014}, it follows that
	\begin{itemize}
		\item for $\a\in \fqs^*$ such that $f_1f_1^*f_2f_2^*(\a)\neq 0$, the curve $\cX$ has $m_1m_2$ points with  coordinate $x=\a$  if and only if $\frac{f_i(\a)f_i^*(\a)}{\a^{s_i}}$ is a $m_i$-power in $\fqs^*$ for $i\in\{1, 2\}$,
		\item for $\a\in \fqs^*$ such that $f_1f_1^*(\a)= 0$, the curve $\cX$ has $m_2$ points with  coordinate $x=\a$  if and only if $\frac{f_2(\a)f_2^*(\a)}{\a^{s_2}}$ is a $m_2$-power in $\fqs^*$,
		\item for $\a\in \fqs^*$ such that $f_2f_2^*(\a)= 0$, the curve $\cX$ has $m_1$ points with  coordinate $x=\a$  if and only if $\frac{f_1(\a)f_1^*(\a)}{\a^{s_1}}$ is a $m_1$-power in $\fqs^*$.
	\end{itemize}
	From the proof of Theorem \ref{theoX1}, for $i\in \{1, 2\}$, we have
	$$
	\left(\frac{f_i(\a)f_i^*(\a)}{\a^{s_i}}\right)^{\frac{q^2-1}{m_i}}=1\Leftrightarrow \a \text{ is a root of } h_{i, 1}h_{i, 2}
	$$
	where
	$$
	h_{i, 1}(x)=(f_i(x)f_i^*(x))^{q-1}-x^{s_i(q-1)}\quad \text{and}\quad h_{i, 2}(x)=\sum_{j=0}^{\frac{q+1}{m_i}-1}(f_i(x)f_i^*(x))^{(q-1)j}x^{s_i(q-1)\left(\frac{q+1}{m_i}-1-j\right)}.
	$$
	From the proof of Theorem  \ref{theoX1}, we also have that if $\beta \in \fqs$ satisfies $\beta^{(q+1, 2(d_1-s_1), 2(d_2-s_2))}=1$, then $h_{i, 1}(\beta)=0$. Further, for $i\in \{1, 2\}$, if $\beta\in \fq^*$ and $f_i(\beta)f_i^*(\beta)\neq 0$, then $h_{i, 1}(\beta)=0$. Hence 
	\begin{align*}
		\#\cX (\fqs)&\geq m_1m_2((q+1, 2(d_1-s_1), 2(d_2-s_2))+q-3-2N_{f_1f_2}(\fq^*))\\
		&\hspace{5.7cm} +2m_2N_{f_1}(\fq^*)+2m_1N_{f_2}(\fq^*).
	\end{align*}
\end{proof}
\begin{example}\label{Example12}
	For polynomials $f_1, f_2\in \fq[x]$ satisfying the conditions of Theorem \ref{theoX1X1} and the curve $\cX$ as defined in (\ref{curveX1X1}), we have the following table.
	
	\begin{table}[h!]
		\captionof*{table}{\bf{New entry}}
		\centering
		\begin{tabular}{|c|c|c|c|c|c|c|c|c|c|c|c|c|c|}\hline 
			$q$ & $m_1$ & $m_2$ & $s_1$ & $s_2$ & $f_1$ & $f_2$ & $g$ & $\# \cX(\fqs)$ & $\text{OLB}$ \\ \hline 
			$19$ & $2$ & $4$ & $4$ & $4$ & $x^4+2$ & $x^4+7$ & $33$ & $1280$ & $1248$\\ \hline
		\end{tabular}
	\end{table}
	Also, for self-reciprocal polynomial $f_1\in \fq[x]$, we have the following improvements in manYPoints table in \cite{MP2009}.
	\begin{table}[h!]
		\captionof*{table}{\bf{New entry}}
		\centering
		\begin{tabular}{|c|c|c|c|c|c|c|c|c|c|c|c|c|c|}\hline 
			$q$ & $m_1$ & $m_2$ & $s_1$ & $s_2$ & $f_1$ & $f_2$ & $g$ & $\# \cX(\fqs)$ & $\text{OLB}$ \\ \hline 
			$11$ & $3$ & $3$ & $2$ & $2$ & $x^2+1$ & $x^2+7$ & $16$ & $402$ & $370$\\ \hline
			$11$ & $3$ & $6$ & $0$ & $1$ & $x^2+1$ & $x^2+10$ & $22$ & $462$ & $459$ \\ \hline
			$17$ & $3$ & $6$ & $2$ & $2$ & $x^2+1$ & $x^2+3$ & $37$ & $1224$ & $1179$\\ \hline
		\end{tabular}
	\end{table}
	\begin{table}[h!]
		\captionof*{table}{\bf{New record}}
		\centering
		\begin{tabular}{|c|c|c|c|c|c|c|c|c|c|c|c|c|c|c|}\hline 
			$q$ & $m_1$ & $m_2$ & $s_1$ & $s_2$ & $f_1$ & $f_2$ & $g$ & $\# \cX(\fqs)$ & $\text{OLB}$ \\ \hline 
			$5$ & $3$ & $6$ & $2$ & $5$ & $x^2+1$ & $x^2+4$ & $22$ & $174$ & $168$ \\ \hline
		\end{tabular}
	\end{table}
\end{example}

Analogously to Theorem \ref{theoX1X1}, we have the following result corresponding to another type of fibre product. 

\begin{theorem}\label{theoX2X2} 
Let $i\in\{1, 2\}$. Let $m_i\geq 2$ be a divisor of $q-1$, $s_i$ be an integer $0 \leq s_i < m_i$ and $f_i$ be a separable polynomial in $\fq [x]$ of degree $d_i$ satisfying $f_i(0)\neq 0$ and $(f_i, f_i^*)=(f_1f_1^*, f_2f_2^*)=1$. Then the curve $\cX$ defined by the affine equations
\begin{equation}\label{curveX2X2}
\cX: \left\{
\begin{array}{l}
y_2^{m_2}=\frac{x^{s_2}f_2(x)}{f_2^*(x)}\\
y_1^{m_1}=\frac{x^{s_1}f_1(x)}{f_1^*(x)}
\end{array} \right. 
\end{equation}
has genus 
$$
g=m_1m_2(d_1+d_2)-d_1m_2-d_2m_1+1-\kappa
$$ 
where $\kappa=(m_1m_2, s_1m_2, s_2m_1)$. Further if $\fqs$ is the full constant field of $\fqs(\cX)$, $[\fqs(\cX), \fqs(x)]=m_1m_2$ and $(f_i, x^{q+1}-1)=1$ for $i\in\{1, 2\}$, then the number of $\fqs$-rational points $\#\cX (\fqs)$ of the curve $\cX$ satisfies 
$$
\#\cX (\fqs)\geq m_1m_2(q+1).
$$
\end{theorem}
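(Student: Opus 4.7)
The plan is to mirror the argument of Theorem \ref{theoX1X1}, viewing $\fqs(\cX)$ as a tower of two Kummer extensions. Starting from $K(x, y_1)/K(x)$ defined by $y_1^{m_1} = x^{s_1} f_1(x)/f_1^*(x)$, Theorem \ref{theoX2} gives $g(K(x,y_1)) = d_1(m_1-1) + 1 - (m_1, s_1)$, and the standard Kummer analysis shows that the roots of $f_1 f_1^*$ are totally ramified, while $P_0$ and $P_\infty$ each carry $(m_1, s_1)$ places with common ramification index $m_1/(m_1, s_1)$. The hypothesis $(f_1 f_1^*, f_2 f_2^*) = 1$ ensures that the zeros of $f_2 f_2^*$ split completely in $K(x, y_1)/K(x)$.

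Next I will compute the divisor of the function $h := x^{s_2} f_2(x)/f_2^*(x)$ inside $K(x, y_1)$. Transferring the formula $(h)_{K(x)} = \sum P_{\beta_i} - \sum P_{\beta_i^{-1}} + s_2 P_0 - s_2 P_\infty$ (where $\beta_i$ are the roots of $f_2$) through the extension yields valuations $\pm 1$ at the places over the roots of $f_2 f_2^*$ and $\pm s_2 m_1/(m_1, s_1)$ at the places $Q_{0, i}$ and $Q_{\infty, i}$. In the Kummer extension $K(x, y_1, y_2)/K(x, y_1)$ of degree $m_2$, this gives ramification index $m_2$ at each place above a root of $f_2 f_2^*$, and ramification index $m_2 (m_1, s_1)/\kappa$ at each of the $(m_1, s_1)$ places above $0$ and $\infty$. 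The step I expect to need the most bookkeeping is the gcd identity $(m_2, s_2 m_1/(m_1, s_1)) = \kappa/(m_1, s_1)$, which follows from the elementary manipulation $(m_1, s_1) \cdot (m_2, s_2 m_1/(m_1, s_1)) = ((m_1, s_1) m_2, s_2 m_1) = \kappa$. Applying Riemann--Hurwitz to $K(x, y_1, y_2)/K(x, y_1)$ and simplifying then yields the asserted genus formula; the simpler form compared to Theorem \ref{theoX1X1} reflects the symmetry between $P_0$ and $P_\infty$, whose valuations of $h$ differ only in sign and therefore contribute equally to the different.

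For the rational-point bound, I will specialize the computation of Theorem \ref{theoX2} to the $(q+1)$-th roots of unity in $\fqs$. For any $\beta \in \fqs$ with $\beta^{q+1} = 1$, the identity $\beta^q = \beta^{-1}$ combined with $f_i \in \fq[x]$ gives $f_i^*(\beta) = \beta^{d_i} f_i(\beta)^q$, whence
\[
h_{i, 1}(\beta) := (\beta^{s_i} f_i(\beta))^{q+1} - f_i^*(\beta)^{q+1} = 0
\]
for $i = 1, 2$; consequently $\beta^{s_i} f_i(\beta)/f_i^*(\beta)$ is an $m_i$-th power in $\fqs^*$. Under the hypothesis $(f_i, x^{q+1} - 1) = 1$ for $i \in \{1,2\}$, all $q+1$ such $\beta$ lie outside the zero set of $f_1 f_1^* f_2 f_2^*$, so by the fibre-product criterion of \cite[Theorem 4]{OT2014} each $\beta$ lifts to exactly $m_1 m_2$ rational places of $\fqs(\cX)$, producing the lower bound $\#\cX(\fqs) \geq m_1 m_2(q+1)$.
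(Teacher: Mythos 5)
Your proposal is correct and is exactly the argument the paper intends: the paper states Theorem \ref{theoX2X2} without proof, noting only that it is ``analogous to Theorem \ref{theoX1X1}'', and your write-up carries out that analogy faithfully — the two-step Kummer tower, the divisor transfer with the gcd identity $(m_1,s_1)\,(m_2, s_2 m_1/(m_1,s_1)) = \kappa$, Riemann--Hurwitz, and the specialization of $h_{i,1}$ to the $(q+1)$-th roots of unity combined with \cite[Theorem 4]{OT2014} for the point count. All the individual computations check out.
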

\begin{example}\label{Example11}
For polynomials $f_1, f_2\in \fq[x]$ satisfying the conditions of Theorem \ref{theoX2X2} and the curve $\cX$ as defined in (\ref{curveX2X2}), we have the following table.
\begin{table}[h!]
\captionof*{table}{\bf{New entry}}
\centering
\begin{tabular}{|c|c|c|c|c|c|c|c|c|c|c|c|c|c|}\hline 
$q$ & $m_1$ & $m_2$ & $s_1$ & $s_2$ & $f_1$ & $f_2$ & $g$ & $\# \cX(\fqs)$ & $\text{OLB}$ \\ \hline 
$13$ & $3$ & $3$ & $0$ & $2$ & $x^2+3x+3$ & $x+4$ & $16$ & $558$ & $464$\\ \hline
$13$ & $4$ & $4$ & $1$ & $1$ & $x+2$ & $x+6$ & $21$ & $568$ & $556$\\ \hline
$ 17 $ & $ 4 $ & $ 4 $ & $ 1 $ & $ 1 $ & $x+3$ & $x+8$ & $21$ & $ 808 $ & $ 794 $\\ \hline
\end{tabular}
\end{table} 

\begin{table}[h!]
\captionof*{table}{\bf{New record}}
\centering
\begin{tabular}{|c|c|c|c|c|c|c|c|c|c|c|c|c|c|}\hline 
$q$ & $m_1$ & $m_2$ & $s_1$ & $s_2$ & $f_1$ & $f_2$ & $g$ & $\# \cX(\fqs)$ & $\text{OLB}$ \\ \hline 
$13$ & $2$ & $4$ & $0$ & $2$ & $x^2+4$ & $x+5$ & $11$ & $444$ & $400$\\ \hline
$13$ & $2$ & $6$ & $0$ & $2$ & $x+3$ & $x+6$ & $13$ & $444$ & $438$\\ \hline
\end{tabular}
\end{table}
\end{example}

\end{document}